\newcommand{\bY}{\mathbf{Y}}
\newcommand{\bN}{\mathbf{N}}
\newcommand{\bZ}{\mathbf{Z}}
\newcommand{\Me}{\mathrm{Me}}
\newcommand{\Mee}{\widehat{\Me}}
\newcommand{\Fe}{\widehat{F}}
\newcommand{\R}{\mathbb{R}}
\newcommand{\N}{\mathbb{N}}
\renewcommand{\P}{\mathrm{P}}
\newcommand{\dd}{\mathrm{d}}
\newtheorem{theorem}{Theorem}[section]
\newtheorem{proposition}{Proposition}[section]
\newtheorem{lemma}{Lemma}[section]
\begin{document}

\title{The median of a jittered Poisson distribution}

\author{Jean-Fran\c{c}ois Coeurjolly and Jo\"elle Rousseau-Tr\'epanier \\
              UQAM, Montr\'eal \\
              \texttt{coeurjolly.jean-francois@uqam.ca}\\
              \texttt{rousseau\_trepanier.joelle@courrier.uqam.ca}
}

\date{}

\maketitle

\begin{abstract}
Let $N_\lambda$ and $U$ be two independent random variables respectively distributed as a Poisson distribution with parameter $\lambda >0$ and a uniform distribution on $(0,1)$. This paper establishes that the median, say $M$, of $N_\lambda+U$ is close to $\lambda +1/3$ and more precisely that $M-\lambda-1/3=o(\lambda^{-1})$ as $\lambda\to \infty$. This result is used to construt a very simple robust estimator of $\lambda$ which is consistent and asymptotically normal. Compared to known robust estimates, this one can still be used with large datasets ($n\simeq 10^9$).\\
{\it Keywords:}: {Robust estimate \and Poisson distribution \and Quantile}
\end{abstract}

\section{Introduction and position of the problem}

The Poisson distribution is commonly used for modeling count data. Let $\bN_\lambda=(N_{1,\lambda},\dots, N_{n,\lambda})$ be a sample of $n\geq 1$ independent and identically distributed random variables distributed as $N_\lambda$ a Poisson distribution with parameter $\lambda >0$. Different strategies exist to make the maximum likelihood estimator of $\lambda$ more robust to outliers. For example, specific M-estimators (such as the modified Tukey's type estimate) for $\lambda$ have been investigated deeply by~\cite{elsaied:fried:16}. The authors also investigate weighted likelihood type estimators and trimmed-mean estimators. In the present paper, we focus our attention on the simplest robust alternative to the maximum likelihood estimator, which is the sample median of $\bN_\lambda$ and actually on a theoretical problem induced by the use of such an estimator. 

To introduce our contribution, let us consider the following standard notation. For a  random variable $Y$, we denote by $F_Y(\cdot)$ its cumulative distribution function (cdf), by $F_Y^{-1}(p)$ its quantile of order $p\in (0,1)$ and by $\Me_Y=F_Y^{-1}(1/2)$ its theoretical median. Based on a sample $\mathbf{Y}=(Y_1,\dots,Y_n)$ of $n$ identically distributed random variables we denote by $\Fe(\cdot;\mathbf Y)$ the empirical cdf, by $\Fe^{-1}(p;\mathbf Y)$ the sample quantile of order $p$ given by
  $\Fe^{-1}(p;\mathbf Y) = \inf \{ x\in \R: p \leq \Fe(x;\bY)\}.$
The sample median is simply denoted by $\Mee(\mathbf Y)=\Fe^{-1}(1/2;\mathbf Y)$. Finally, the density of $Y$ when it exists, is denoted by $f_Y$.

 Due to the discrete nature of the Poisson distribution, the limiting distribution of $\Mee(\bN_\lambda)$ does not follow from standard theory, see e.g.~\cite{vandervaart} or~\cite{serfling:09}, since it is required that the model possesses a positive density at the true median. To circumvent this problem, one classical strategy introduced by~\cite{stevens:50}, applied to count data by~\cite{machado:05} and to the estimation of the intensity of a homogeneous spatial point process by~\cite{coeurjolly:17}, consists in artificially imposing smoothness in the problem through jittering: i.e. we add to each count variable $N_{i,\lambda}$ a random variable $U_i\sim \mathcal U((0,1))$. Let $\bZ_\lambda=(Z_{1,\lambda},\dots,Z_{n,\lambda})$, where $Z_{i,\lambda}=N_{i,\lambda}+U_i$ for $i=1,\dots,n$, be the sample of independent random variables distributed as $Z_\lambda=N_\lambda+U$ where $U\sim \mathcal U((0,1))$ is independent of $N_\lambda$. It can be shown that $Z_\lambda$ admits a density almost everywhere, which is given by 
\begin{equation}\label{eq:fZlambda}
  f_{Z_\lambda}(t) = \P(N_\lambda = \lfloor t \rfloor), \quad t\ge 0.
\end{equation}
Standard asymptotic theory (e.g. \cite{serfling:09}) is now valid: as $n\to \infty$
\begin{equation} \label{eq:TCL_Mee}
  \sqrt{n} \left( \Mee(\bZ_\lambda) -\Me_{Z_\lambda}\right) \to N(0,\sigma_\lambda^2)
\end{equation}
in distribution, where $\sigma^{-2}_\lambda=4 f_{Z_\lambda}(\Me_{Z_\lambda})^2$. 

Equation~\eqref{eq:TCL_Mee} is the source of motivation for the present paper since we are clearly invited to understand how far $\Me_{Z_\lambda}$ is from $\lambda$. The study of the median for Poisson and Gamma distributions has a long story, see~\cite{choi:94} and the references therein. We can even go back to an old and outstanding formula by Ramanujan, see \cite[Equation (3)]{choi:94}. Among several results, \cite{choi:94} proves a conjecture proposed by~\cite{chen:rubin:86} which is that for every $\lambda >0$ 
\[
  -\log 2 \leq \Me_{N_\lambda}-\lambda \leq \frac13.
\]
It is worth mentioning that these bounds are optimal, in the sense that there exists at least one value of $\lambda$ for which the lower-bound or upper-bound is reached. \cite{adell:jodra} complete this work and prove that asymptotically as $\lambda \to~\infty$
\[
  \liminf_{\lambda \to \infty} \, \Me_{N_\lambda} -\lambda= -\frac23 \quad \text{ and }\quad \limsup_{\lambda \to \infty} \, \Me_{N_\lambda} -\lambda= \frac13.
\]
Going back to $\Me_{Z_\lambda}$, using these  results one easily deduces that
\[
  -\log(2) \le \Me_{Z_\lambda} - \lambda  \le \frac43.
\]
Such a result is definitely pessimistic since the contribution of this paper is to show that we have the surprising and unexpected following result: $\Me_{Z_\lambda}$ is actually very close to $\lambda+1/3$. Even more, our main result implies that, by denoting $\delta_\lambda= \Me_{Z_\lambda} -\lambda -1/3$
\begin{equation}
  \label{eq:liminfsup}
\liminf_{\lambda \to \infty} \, \lambda\, \delta_\lambda = -\frac{8}{405} 
\quad \text{ and } \quad 
\limsup_{\lambda \to \infty} \lambda \, \delta_\lambda = \frac4{135}.
\end{equation}
The latter results suggests us to propose $\hat \lambda^\mathrm{J} = \Me(\bZ_\lambda)-1/3$ as a new estimator for $\lambda$.

The rest of the paper is organized as follows. Section~\ref{sec:result} presents our main result and provides a sketch of the proof while Section~\ref{sec:sim} illustrates this result.
We investigate statistical properties of $\hat \lambda^\mathrm{J}$ and compare its performances with the maximum likelihood estimator and the Tukey's modified estimator proposed by~\cite{elsaied:fried:16}. Finally, we show that $\hat \lambda^\mathrm{J}$ does not suffer from computational problems and can still be used with very large datasets.
The proof of our main result relies upon simple technical lemmas which are postponed to Appendix.

\section{Main result} \label{sec:result}

We consider the notation introduced in the previous section. Let us first mention that the cumulative distribution function $F_{Z_\lambda}$ is given for any $t\ge 0 $ by
\begin{equation}
  \label{eq:FZlambda}
  F_{Z_\lambda}(t)=\P{(Z_\lambda\leq t)} = \sum_{k=0}^{\lfloor t \rfloor} \P(N_\lambda = k) 
  + (t-\lfloor t \rfloor) \, \P (N_\lambda = \lfloor t \rfloor),
\end{equation}
whereby it can be checked that $Z_\lambda$ indeed admits a density almost everywhere and that this density is given by~\eqref{eq:fZlambda}. Our main result is based on the empirical finding depicted in Figure~\ref{fig1}. Figure~\ref{fig1}~(a) illustrates that for any $\lambda>0$, $\Me_{N_\lambda}-\lambda \in [-\log(2),1/3]$, while Figure~\ref{fig1}~(b)-(c) illustrate that indeed $\Me_{Z_\lambda} \approx \lambda+1/3$. Note that to evaluate $\Me_{Z_\lambda}$ we use root-finding algorithm for the function $|F_{Z_\lambda}(\cdot)-1/2|$. We now present our main result.

\begin{figure}[htbp]
\centering\begin{tabular}{cc}
\subfloat[{$\Me_{N_\lambda}-\lambda$}]{\includegraphics[scale=.45]{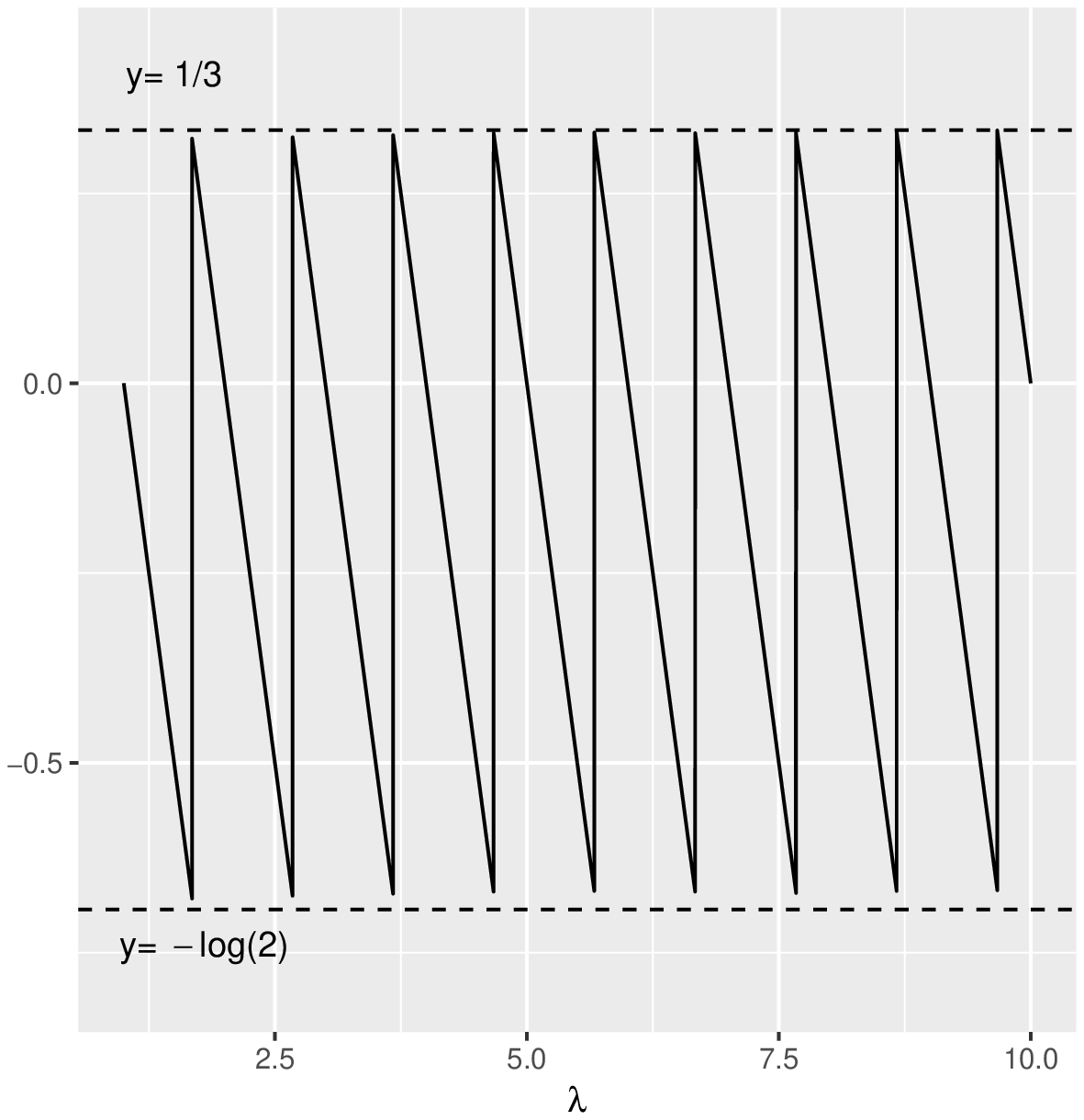}} &  
\subfloat[{$\Me_{Z_\lambda}-\lambda-1/3$, $\lambda\in[0,10]$}]{\includegraphics[scale=.45]{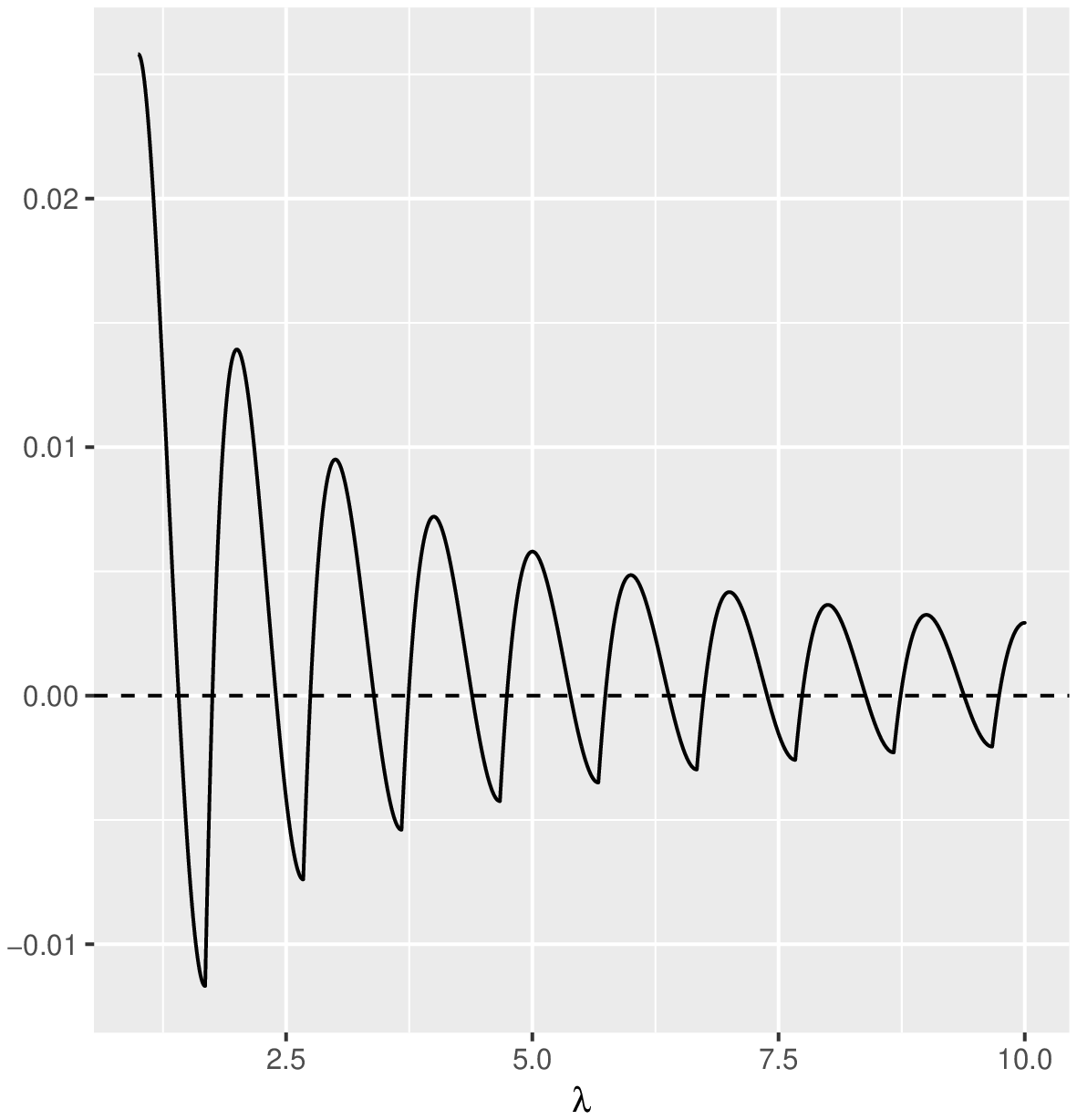}} \\
\subfloat[{$\Me_{Z_\lambda}-\lambda-1/3$, $\lambda \in [10,20]$}]{\includegraphics[scale=.45]{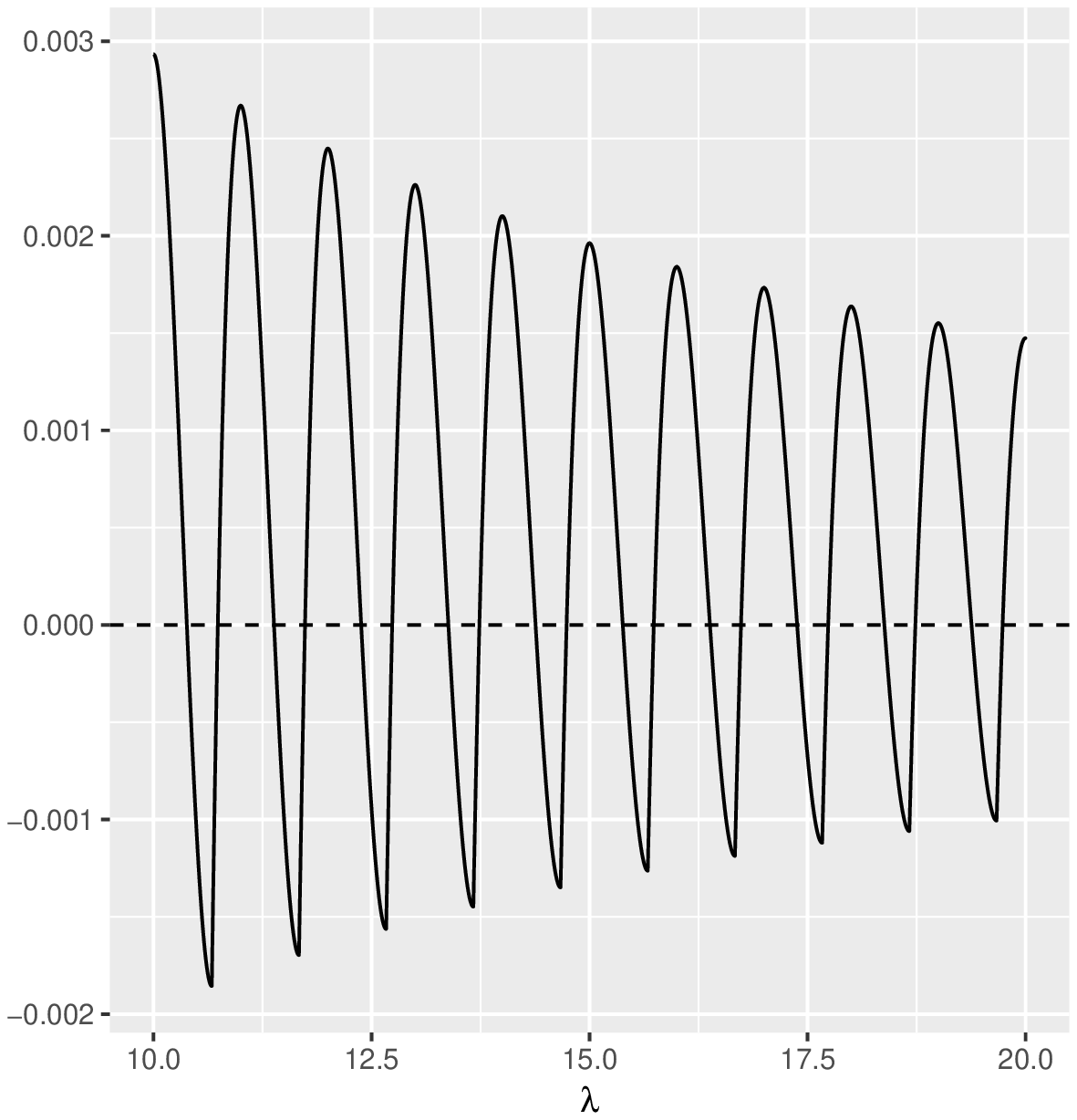}} &  
\subfloat[{$\mathcal H(x)$}]{\includegraphics[scale=.45]{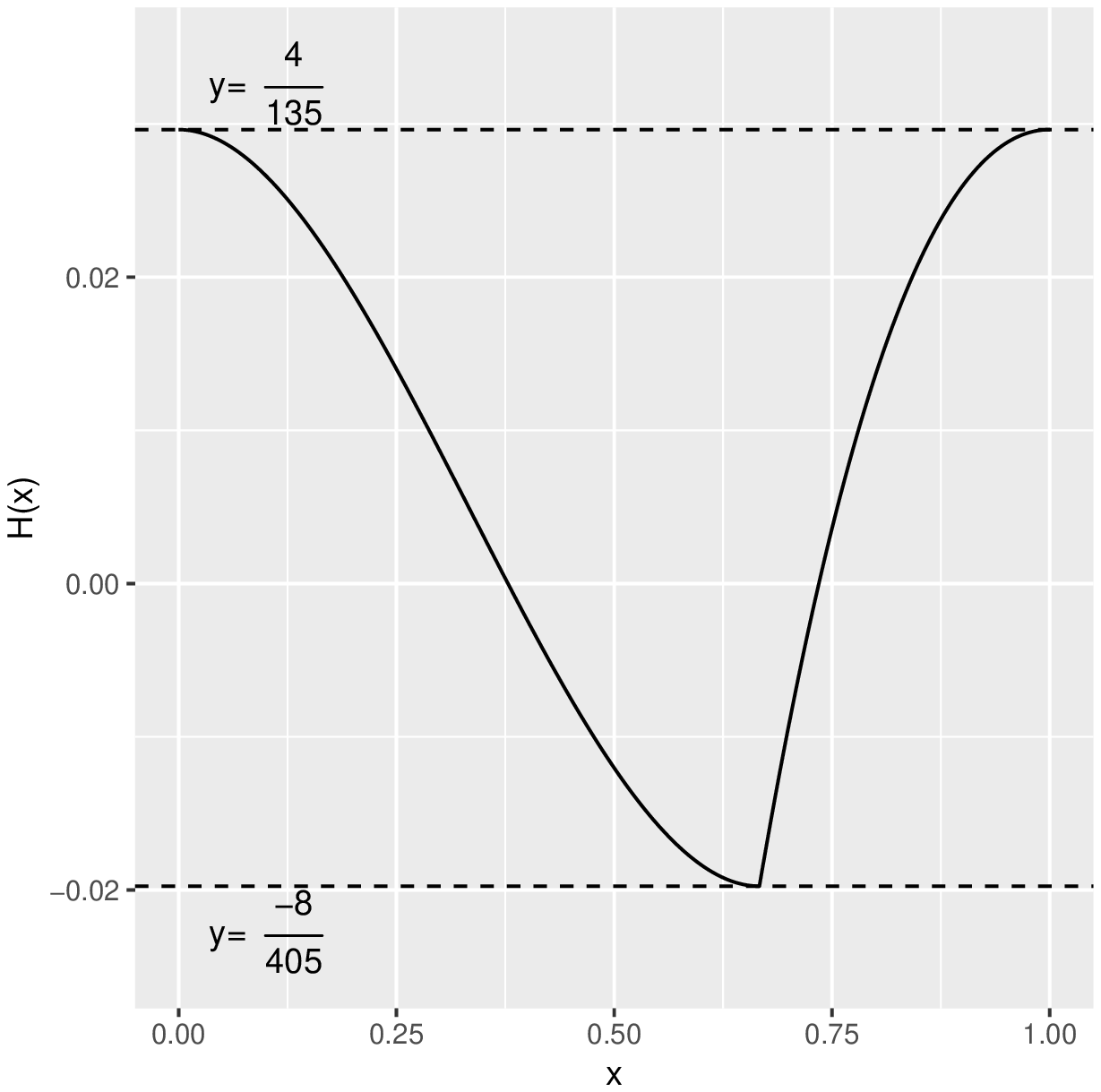}} \\  
\end{tabular}
\caption{\label{fig1} (a)-(c) Plots of $\Me_{N_\lambda}-\lambda$ and $\Me_{Z_\lambda}-\lambda-1/3$ in terms of $\lambda$; (d) Plot of the function $\mathcal H(\cdot)$ given by~\eqref{eq:H}.}
\end{figure}

\begin{theorem}\label{thm:1}
Let $Z_\lambda=N_\lambda+U$ where $N_\lambda$ and $U$ are two independent random variables respectively distributed as a Poisson distribution with parameter $\lambda >0$ and a uniform distribution on $(0,1)$. Then, as $\lambda \to \infty$, the median, $\Me_{Z_\lambda}$ of $Z_\lambda$ satisfies
\begin{equation}\label{eq:thm}
 Me_{Z_\lambda} = \lambda +\frac13 + \frac{\mathcal H(\lambda-\lfloor \lambda \rfloor)}\lambda  +  o \left( \frac1\lambda \right)
\end{equation}
where $\mathcal H:[0,1]\to \R$ is the continous function given by
\begin{equation}\label{eq:H}
  \mathcal H(x) =\left\{
\begin{array}{ll}
\frac{x^2(x-1)}{3}+\frac{4}{135}, & \mbox{ if } x\in [0,2/3] \\
\frac{x}{3}(x^2-4x+5)-\frac{86}{135}, & \text{ if } x\in[2/3,1].
\end{array}
\right.   
\end{equation}
\end{theorem}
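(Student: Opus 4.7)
The plan is to solve $F_{Z_\lambda}(t) = 1/2$ directly via \eqref{eq:FZlambda}. Write $\lambda = m + r$ with $m = \lfloor \lambda \rfloor$ and $r \in [0,1)$, and search for the median in the form $t = \lambda + 1/3 + \eta/\lambda$, where $\eta = \eta(\lambda,r)$ is to be determined. For large $\lambda$ one has $\lfloor t \rfloor = m$ when $r \in [0, 2/3)$ and $\lfloor t \rfloor = m+1$ when $r \in [2/3, 1)$; substituting and using $\P(N_\lambda \le m-1) = \P(N_\lambda \le m) - \P(N_\lambda = m)$, both cases collapse to the single identity
\[
\P(N_\lambda \le m) - \tfrac12 + \Bigl(r - \tfrac{2}{3} + \tfrac{\eta}{\lambda}\Bigr)\, q_\lambda \;=\; 0,
\]
with $q_\lambda = \P(N_\lambda = m)$ in the first case and $\P(N_\lambda = m+1)$ in the second. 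This recasts the theorem as a scalar asymptotic problem for $\eta$.

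Since $q_\lambda \sim (2\pi\lambda)^{-1/2}$, an $o(1/\lambda)$ accuracy on $\eta$ demands $o(\lambda^{-3/2})$ accuracy on the reduced equation. The second step is therefore to produce three-term expansions, uniformly in $r \in [0,1)$, of the two ingredients. For the point mass, Stirling's formula applied to $\log \P(N_\lambda = m) = -\lambda + m \log \lambda - \log m!$ together with the expansion of $\log(1 + r/m)$ yields
\[
\P(N_\lambda = m) = \frac{1}{\sqrt{2\pi\lambda}}\Bigl(1 + \frac{\alpha(r)}{\lambda} + o\bigl(\lambda^{-1}\bigr)\Bigr),
\]
with $\alpha$ an explicit polynomial in $r$; the analogue for $m+1$ then follows from $\P(N_\lambda = m+1)/\P(N_\lambda = m) = \lambda/(m+1)$. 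For the CDF, the key device is the Poisson--Gamma duality $\P(N_\lambda \le m) = \int_\lambda^\infty t^m e^{-t}/m!\,\dd t$. After the substitution $t = m + u\sqrt{m}$, Stirling turns the integrand into $\phi(u)$ times a formal series in $m^{-1/2}$, and integrating each term against the Gaussian tail (via the recursion $\int_a^\infty u^k \phi(u)\,\dd u = a^{k-1}\phi(a) + (k-1)\int_a^\infty u^{k-2}\phi(u)\,\dd u$, with $a = r/\sqrt{m}$) produces
\[
\P(N_\lambda \le m) - \tfrac12 = \frac{2/3 - r}{\sqrt{2\pi\lambda}} + \frac{\beta(r)}{\lambda^{3/2}} + o\bigl(\lambda^{-3/2}\bigr),
\]
the crucial fact being the \emph{absence} of an intermediate $\lambda^{-1}$ contribution; the constant $2/3$ in the leading coefficient combines the direct shift $-r$ from $\Phi(-r/\sqrt{\lambda})$ with the skewness correction $1/6$ coming from the $u^3$ term.

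Substituting both expansions into the reduced equation, the $\lambda^{-1/2}$ contribution cancels identically: this is the precise reason the median sits at $\lambda + 1/3$ rather than elsewhere (any other constant would leave an uncompensated $\Theta(\lambda^{-1/2})$ residual). The balance at $\lambda^{-3/2}$ reads $\beta(r) + (r - 2/3)\alpha(r)/\sqrt{2\pi} = -\eta/\sqrt{2\pi}$, from which $\eta = \mathcal H(r)$ is read off as a polynomial in $r$; direct arithmetic gives $\mathcal H(r) = r^2(r-1)/3 + 4/135$ on $[0, 2/3)$. The second interval $[2/3, 1)$ is treated by the same scheme with $q_\lambda = \P(N_\lambda = m+1)$ and fractional part $r - 2/3 + \eta/\lambda$, producing the second branch of $\mathcal H$. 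Continuity at $r = 2/3$ (with common value $-8/405$) and periodicity $\mathcal H(0) = \mathcal H(1) = 4/135$ are verified by direct substitution, recovering the constants announced in~\eqref{eq:liminfsup}. The passage from the approximate root to the actual median is legitimate because $f_{Z_\lambda}$ is bounded below by a constant multiple of $\lambda^{-1/2}$ in a neighborhood of $\Me_{Z_\lambda}$, so any $o(\lambda^{-3/2})$ residual in $F_{Z_\lambda}$ translates into an $o(1/\lambda)$ discrepancy in $t$ by the implicit function theorem.

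The main obstacle is the three-term expansion of $\P(N_\lambda \le m) - 1/2$ uniformly in $r$. Textbook Edgeworth statements for lattice sums give only the leading correction $1/(6\sqrt{2\pi\lambda})$ and their remainders are typically not uniform in the lattice position. Pushing the Laplace expansion of the incomplete gamma integral one order further, while tracking the contributions from the Gaussian cutoff $a = r/\sqrt{m}$ and verifying the vanishing of the intermediate $\lambda^{-1}$ coefficient, is the technical heart of the argument and is presumably the content of the ``simple technical lemmas postponed to Appendix'' mentioned in the introduction; once these expansions are established, the theorem reduces to the algebraic recombination sketched above.
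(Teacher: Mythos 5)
Your route is genuinely different from the paper's. You expand $F_{Z_\lambda}$ directly at the candidate point $\lambda+\tfrac13+\eta/\lambda$ and solve for $\eta$, which requires a three-term expansion of $\P(N_\lambda\le m)-\tfrac12$ (to $o(\lambda^{-3/2})$, uniformly in the fractional part $r$) together with a two-term expansion of the relevant point mass, followed by an inversion step using the lower bound $f_{Z_\lambda}(t)=\P(N_\lambda=\lfloor t\rfloor)\ge c\lambda^{-1/2}$ near the median. Your reduction to the scalar equation is correct and coincides with the paper's Lemma~\ref{lemma1} (note that you implicitly use the corrected form of \eqref{eq:FZlambda}, with the sum running to $\lfloor t\rfloor-1$); the cancellation of the $\lambda^{-1/2}$ terms is exactly the structural reason for the constant $\tfrac13$, and your parity argument for the absence of a $\lambda^{-1}$ term in the CDF expansion is sound. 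The paper proceeds otherwise: it fixes $x=\lambda-\lfloor\lambda\rfloor$ and $k$, sets $w_n(x,k)=\P(Z_{n+x}\le n+x+\tfrac13+k/(n+x))$, and never expands $w_n-\tfrac12$ at all. A plain CLT gives $w_n\to\tfrac12$, while the scaled increment $\Delta_n(x,k)\propto w_{n+1}(x,k)-w_n(x,k)$ is computed exactly via the Poisson--Gamma relation and expanded by elementary Taylor arguments (Lemmas~\ref{lemma2}--\ref{lemma4}) to show its sign is that of $\mathcal H(x)-k$ for large $n$; eventual monotonicity plus the limit $\tfrac12$ then sandwiches $\Me_{Z_{n+x}}$ as in Proposition~\ref{prop}. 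What each approach buys: yours is constructive and, if the expansions are established uniformly in $r$, directly delivers the uniformity that the statement of Theorem~\ref{thm:1} implicitly needs; the paper's trick trades the delicate third-order Edgeworth/Laplace analysis for elementary expansions of an explicit ratio, at the price of a pointwise-in-$x$ sandwich.

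The caveat is that the technical heart of your plan is asserted rather than proved: you never exhibit $\beta(r)$ (the $\lambda^{-3/2}$ coefficient of $\P(N_\lambda\le m)-\tfrac12$), nor carry out the arithmetic showing that $\eta=-\sqrt{2\pi}\,\beta(r)-(r-\tfrac23)\alpha(r)$ reproduces the two branches of \eqref{eq:H} with the constants $\tfrac{4}{135}$ and $-\tfrac{86}{135}$; and the uniformity in $r$ of the $o(\lambda^{-3/2})$ remainder, which you rightly identify as the weak point of textbook Edgeworth statements, is left open. Your presumption that the appendix lemmas of the paper supply these expansions is incorrect --- they expand the increment $w_{n+1}-w_n$, not the CDF itself --- so this material would have to be developed from scratch along the incomplete-gamma/Laplace route you sketch. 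Two further points to watch when completing it: at $r=\tfrac23$ the identification of $\lfloor t\rfloor$ depends on the sign of $\eta$ (the paper splits $x=\tfrac23$ into $k<0$ and $k\ge 0$), and the $o(1/\lambda)$ in \eqref{eq:thm} as $\lambda\to\infty$ over the reals requires error control uniform in the fractional part, a point on which the paper's own pointwise-in-$x$ Proposition~\ref{prop} is itself somewhat loose.
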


Equation~\eqref{eq:liminfsup} is easily deduced since  (see also Figure~\ref{fig1}(d)) we can check that 
\[
  \inf_{x\in[0,1]} \mathcal H(x) = -\frac{8}{405} \quad \text{ and } \quad \sup_{x\in [0,1]} \mathcal H(x) = \frac{4}{135}.
\]
Theorem~1 ensues from the following proposition for which we provide a sketch of the proof. 

\begin{proposition} \label{prop}
Let $n\geq 1$ and $x\in [0,1)$. For any $\varepsilon>0$, there exists $n_0\in \N$ such that for all $n\geq n_0$
\begin{equation}\label{eq:Zn}
\frac{\mathcal H(x)-\varepsilon}{n+x} \le \Me_{Z_{n+x}} -(n+x)-\frac13    \le \frac{\mathcal H(x)+\varepsilon}{n+x}.  
\end{equation}

\end{proposition}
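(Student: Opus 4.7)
The plan is to sandwich $\Me_{Z_\lambda}$ between two explicit candidates. Since the density $f_{Z_\lambda}$ in~\eqref{eq:fZlambda} is strictly positive on $(0, \infty)$, the cdf $F_{Z_\lambda}$ is strictly increasing and the median is the unique solution of $F_{Z_\lambda}(\Me_{Z_\lambda}) = 1/2$. Writing $\lambda = n+x$ with $n = \lfloor \lambda \rfloor$ and setting $m_\eta := \lambda + \tfrac{1}{3} + \eta/\lambda$, it suffices to establish that, for any $\varepsilon>0$ and $\lambda$ sufficiently large,
\[
F_{Z_\lambda}\bigl(m_{\mathcal H(x)-\varepsilon}\bigr) < \tfrac12 < F_{Z_\lambda}\bigl(m_{\mathcal H(x)+\varepsilon}\bigr),
\]
from which~\eqref{eq:Zn} follows by monotonicity.

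To compute $F_{Z_\lambda}(m_\eta)$ I would use~\eqref{eq:FZlambda} and split on the value of $\lfloor m_\eta \rfloor$. For bounded $\eta$ and $\lambda$ large, $\lfloor m_\eta \rfloor = n$ when $x \in [0,2/3)$ (Case A) and $\lfloor m_\eta \rfloor = n+1$ when $x \in (2/3,1)$ (Case B); the boundary $x = 2/3$ is then handled by the continuity of $\mathcal H$ at $2/3$. Writing $p_k := \P(N_\lambda = k)$, Case A rewrites as
\[
F_{Z_\lambda}(m_\eta) - \tfrac12 = \bigl[\P(N_\lambda \le n-1) - \tfrac12\bigr] + \bigl(x + \tfrac13 + \tfrac{\eta}{\lambda}\bigr) p_n,
\]
and Case B as the analogous formula with $\P(N_\lambda \le n) - 1/2$, coefficient $x - 2/3 + \eta/\lambda$, and $p_{n+1}$ in place of $p_n$.

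The substantive analytic step is to expand the right-hand side to order $o(\lambda^{-3/2})$ as $\lambda \to \infty$ with $x$ fixed. Two technical inputs are required: (i) a Stirling-type expansion $p_n = (2\pi \lambda)^{-1/2}\bigl(1 + c_1(x)/\lambda + O(\lambda^{-2})\bigr)$ with explicit polynomial $c_1(x)$; and (ii) an asymptotic expansion of $\P(N_\lambda \le n-1) - 1/2$ (and its Case B counterpart) of the form $h(x)/\sqrt\lambda + g(x)/\lambda^{3/2} + o(\lambda^{-3/2})$. For integer $\lambda$, (ii) reduces to the classical Szeg\H o--Ramanujan identity $\P(N_n \le n-1) = 1/2 - \theta(n) p_n$ with $\theta(n) = 1/3 + 4/(135n) + O(n^{-2})$. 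The generalization to non-integer $\lambda$ can be obtained either via the Poisson--Gamma duality $\P(N_\lambda \le k) = 1 - \Gamma(k+1,\lambda)/\Gamma(k+1)$ and a Laplace expansion of the incomplete gamma integral around the saddle $t = \lambda$, or via an Edgeworth-with-lattice-correction expansion that carefully tracks the fractional part $x$. Matching the $O(1/\sqrt\lambda)$ terms forces $h(x) = -(x + 1/3)/\sqrt{2\pi}$ (with the appropriate analogue in Case B), and matching the $O(\lambda^{-3/2})$ terms then extracts $\mathcal H(x) = -\sqrt{2\pi}\,g(x) - (x + \tfrac13)\,c_1(x)$. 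After simplification this collapses to the two piecewise polynomials of~\eqref{eq:H}, with the cubic parts $x^2(x-1)/3$ and $x(x^2 - 4x + 5)/3$ arising from the interaction between the linear coefficients $x + 1/3$, $x - 2/3$ and the Stirling correction $c_1(x)$, and the constants $4/135$, $-86/135$ arising from the subleading term of the generalized Ramanujan expansion.

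The principal obstacle is step~(ii): producing the generalized Ramanujan-type expansion at non-integer $\lambda$ to high enough order, and uniformly in the fractional part $x$, to capture the precise polynomial encoded in $\mathcal H$. Once (i) and (ii) are packaged as short technical lemmas (presumably the lemmas referenced as deferred to the Appendix), the rest of the proof is routine algebraic bookkeeping to match the constants $4/135$ and $-86/135$.
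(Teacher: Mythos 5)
Your reduction of the problem is fine: the case split on $\lfloor m_\eta\rfloor$ reproduces exactly what the paper records in Lemma~\ref{lemma1}, and sandwiching the unique median between $m_{\mathcal H(x)-\varepsilon}$ and $m_{\mathcal H(x)+\varepsilon}$ by comparing $F_{Z_\lambda}(m_\eta)$ with $1/2$ is a legitimate strategy. But the proof has a genuine gap precisely at the point you yourself flag as the ``principal obstacle'': everything hinges on an expansion of $\P(N_{n+x}\le n-1)-\tfrac12$ (and its Case~B analogue) to absolute order $o(\lambda^{-3/2})$, with the coefficients of both the $\lambda^{-1/2}$ and $\lambda^{-3/2}$ terms given explicitly as functions of the fractional part $x$. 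For integer $\lambda$ this is the Ramanujan $\theta_n=\tfrac13+\tfrac{4}{135n}+O(n^{-2})$ expansion, but for non-integer $\lambda$ no such result is quoted, derived, or even stated precisely; you only gesture at a Laplace expansion of the incomplete gamma function or an Edgeworth expansion with lattice correction. Since the entire content of the Proposition is the exact function $\mathcal H$ of~\eqref{eq:H} --- the cubic pieces and the constants $4/135$ and $-86/135$ --- asserting that ``after simplification this collapses to the two piecewise polynomials'' without producing $h(x)$, $g(x)$, $c_1(x)$ and carrying out the matching leaves the theorem unproved: the sign comparison at $m_{\mathcal H(x)\pm\varepsilon}$ cannot be concluded until those coefficients are actually computed, and they are the hard part, not routine bookkeeping.

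It is worth noting that the paper's proof is organized specifically to avoid needing any such high-order absolute expansion of the Poisson cdf at non-integer argument. Instead of evaluating $F_{Z_{n+x}}(m_k)-\tfrac12$ directly, it sets $w_n(x,k)=\P\bigl(Z_{n+x}\le n+x+\tfrac13+\tfrac{k}{n+x}\bigr)$, observes via the CLT that $w_n(x,k)\to\tfrac12$, and then determines the \emph{side} of $1/2$ on which $w_n$ sits from the \emph{monotonicity} of the sequence, i.e.\ from the sign of the increment $w_{n+1}-w_n$. Through the Poisson--Gamma relation this increment becomes a difference of incomplete gamma integrals whose normalized form $\Delta_n(x,k)$ is expanded by elementary Taylor expansions of the ratio $c_n(v,x)=\bigl(\tfrac{n+v+x}{n+1+x}\bigr)^{n+1}e^{1-v}$ (Lemmas~\ref{lemma2}--\ref{lemma4}), yielding $\Delta_n(x,k)=\tfrac{3}{2(n+1+x)^2}(\mathcal H(x)-k)+o(n^{-2})$. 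In other words, the paper trades the $o(\lambda^{-3/2})$-accurate tail expansion your route requires for an $o(n^{-2})$ expansion of a ratio that needs nothing beyond $\log(1+u)$ and $e^u$ series; the monotone-convergence-to-$1/2$ trick is the missing idea that makes the elementary computation sufficient. If you want to keep your direct route, you must actually prove the generalized Ramanujan-type expansion at non-integer $\lambda$ (uniformly is not needed, but with explicit $x$-dependent constants it is), which is a substantially harder lemma than anything in the paper's appendix.
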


\begin{proof}
Let $k\in \R$, and $(w_n(x,k))_{n\ge 1}$  be the sequence given by $w_n(x,k)= \P(Z_{n+x}\le n+x+1/3+k/(n+x))$ and $V_n= Z_{n+x}-(n+x+1/3+k/(n+x))$. First, 
\[
V_n \stackrel{d}{=}\sum_{i=1}^{n}(N_i-1)+\left(N_x+U-x-1/3-\frac{k}{n+x}\right)
  \]
where $\stackrel{d}{=}$ stands for equality in distribution, $N_1,\ldots,N_n,N_x$ and $U$ are independent random variables, $N_i\sim P(1)$ for $i=1,n$, $N_x\sim P(x)$ and $U\sim \mathcal U([0,1])$. Central limit theorem and Slutsky's lemma show that $ V_n /\sqrt{n} \to N(0,1)$ in distribution as $n\to \infty$, whereby we deduce that for any $x\in [0,1)$ and $k\in \R$, $w_n(x,k) \to 1/2$ as $n\to \infty$. 

Now, assume $x$ and $k$ are such that for $n$ sufficiently large $(w_n(x,k))_{n\ge 1}$ is increasing or decreasing then obviously 
\begin{equation}
    \label{eq:wn1}
    w_n(x,k)\leq w_{n+1}(x,k) \leq \frac{1}{2} \Leftrightarrow \Me_{Z_{n+x}}\geq n+x+\frac{1}{3}+\frac{k}{n+x}
  \end{equation}  
or 
\begin{equation}\label{eq:wn2}
  w_n(x,k)\geq w_{n+1}(x,k) \geq \frac{1}{2} \Leftrightarrow \Me_{Z_{n+x}}\leq n+x+\frac{1}{3}+\frac{k}{n+x}.
\end{equation}

So, the rest of the proof simply consists in proving that the sequence $(w_n(x,k))_{n\ge 1}$ is monotonic for $n$ sufficiently large. We start by noting that the discontinuity at $x=2/3$ of the function $\mathcal H$ comes from the definition of $w_n(x,k)$. Lemma~\ref{lemma1} shows in particular that for $n$ sufficiently large
\begin{itemize}
  \item if $x\in (0,2/3)$ or $x=2/3$ and $k<0$, 
  \[
    w_n(x,k)= \P\left(N_{n+x}\leq n \right)+\left(x-\frac{2}{3}+\frac{k}{n+x}\right)\P\left(N_{n+x}=n\right).
   \] 
   \item if $x\in (2/3,1)$ or $x=2/3$ and $k\ge 0$
   \[
    w_n(x,k)= \P\left(N_{n+x}\leq n\right)+\left(x-\frac{2}{3}+\frac{k}{n+x}\right)\P(N_{n+x}=n+1).
   \]
 \end{itemize} 
Then, we define $\Delta_n(x,k)= (n+1)!/g_{n}(n+1+x) \{w_{n+1}(x,k) - w_n(x,k) \}$ where for any $n\ge 1$ and $u\in \R$, $g_n(u)=\mathrm e^{-u} u^n$. Lemma~\ref{lemma3}, which is based on simple but lengthy Taylor expansions, shows that for any $x \in [0,1)$
\begin{equation*}
\Delta_n(x,k) = \frac{3}{2(n+1+x)^2} \big( \mathcal H(x) - k \big) \; + \; o\left(\frac1{n^2}\right).
\end{equation*}
So, if we set $k=\mathcal H(x)+\varepsilon$, $(w_n(x,k))_{n\ge 1}$ is a decreasing sequence for $n$ sufficiently large which, from~\eqref{eq:wn2} leads to the upper-bound of~\eqref{eq:Zn}. In the same way, if we set $k=\mathcal H(x)-\varepsilon$, $(w_n(x,k))_{n\ge 1}$ is an increasing sequence for $n$ sufficiently large which, from~\eqref{eq:wn1} leads to the lower-bound of~\eqref{eq:Zn}.
\end{proof}

Our main result has a simple statistical application.We suggest to estimate $\lambda$ by $\hat \lambda^\mathrm{J} = \Mee(\bZ_\lambda) - 1/3$: $\hat \lambda^\mathrm{J}$ is almost an unbiased estimator of $\lambda$, and we can use the approximation
\[
   \hat \lambda^\mathrm{J} - \lambda  \approx N\left( 0 , \frac{\sigma^2_\lambda}n \right),
\]
where $1/\sigma_\lambda = 2 \P(N_\lambda= \lfloor \Me_{Z_\lambda} \rfloor )$. Note that $1/\sigma_\lambda$ can simply be estimated by $1/\hat \sigma_\lambda= 2 \P(N_{\hat \lambda^\mathrm{J}} = \lfloor \hat \lambda^\mathrm{J} +1/3\rfloor)$. When $\lambda$ is large, we can even use Stirling's formula to approximate $\sigma_\lambda \sim \sqrt{\pi\lambda/2} $ which is then simply estimated by $\sqrt{\pi\hat \lambda^\mathrm{J}/2}$.
Therefore, $\sqrt{\pi/2}$ represents the ratio of asymptotic standard deviations of $\hat \lambda^\mathrm{J}$ (when $\lambda$ is large) and the maximum likelihood estimator. We can wonder where this $\sqrt{\pi/2}$ comes from: actually this ratio is also the ratio of standard deviations of the sample median to the sample mean when we consider a sample of i.i.d. Gaussian random variables with mean~0 and variance~1.

We end this section by stressing on the simplicity of the estimator $\hat \lambda^\mathrm{J}$. We do not resist to provide the \texttt{R} instruction to evaluate it based on a sample stored in a vector \texttt{y}: \\
\texttt{> median(y+runif(length(y)))-1/3}

\section{Numerical results} \label{sec:sim}

\subsection{Performances of $\hat \lambda^\mathrm{J}$ without outliers}

For 100 values of $\lambda$ between 1 and 10, we generate 10000 replications of samples of Poisson distribution of size $n$ with parameter $\lambda$. We consider $n=50$ and $n=200$. For each sample $\bN_\lambda$, we evaluate  the maximum likelihood estimate that we denote in the sequel by $\hat \lambda^{\mathrm{MLE}}$, $\hat \lambda^\mathrm{J}$ and $\widehat \Me(\bN_{\lambda})$. Figure~\ref{fig2} reports empirical biases in terms of $\lambda$. As expected, $\hat \lambda^\mathrm{MLE}$ and $\hat \lambda^\mathrm{J}$ are almost unbiased while $\Mee(\bN_\lambda)$ has some bias which doesn't disappear with large $\lambda$ or large $n$. Figure~\ref{fig3} shows $\mathrm{RMSE}(\hat \lambda^\mathrm{J})/\mathrm{RMSE}(\hat \lambda^\mathrm{MLE})$ which is the ratio of the root mean squared error (RMSE) of $\hat \lambda^\mathrm{J}$ to the one of $\hat \lambda^\mathrm{MLE}$. Obviously the MLE outperforms $\hat \lambda^\mathrm{J}$ and we observe that the ratio of RMSE is close to $\sqrt{\pi/2}$ when $\lambda$ gets large. Finally, to confirm the estimation of the standard error of $\hat \lambda^\mathrm{J}$ and its asymptotic normality, we investigate the random variable
\begin{equation}
  \label{eq:Deltal}
  \Delta_\lambda = 2\sqrt{n} (\hat \lambda^\mathrm{J} - \lambda)  \P \left( N_{\hat \lambda^{\mathrm{J}}}= \lfloor \hat \lambda^\mathrm{J} +1/3\rfloor \right)
\end{equation}
for which its distribution should be close to standard normal distribution. For each value of $\lambda$ considered, Figure~\ref{fig4} depicts for $n=50$ and $n=200$ the 100 normal probability plots. Actually, we only represent the fitted linear regression models and the expected theoretical line $y=x$. We conclude that for every value of $\lambda$, $\Delta_\lambda$ seems indeed well-approximated by a $N(0,1)$ distribution.

\begin{figure}[h]
\centering\begin{tabular}{cc}
\subfloat[{$n=50$}]{\includegraphics[scale=.45]{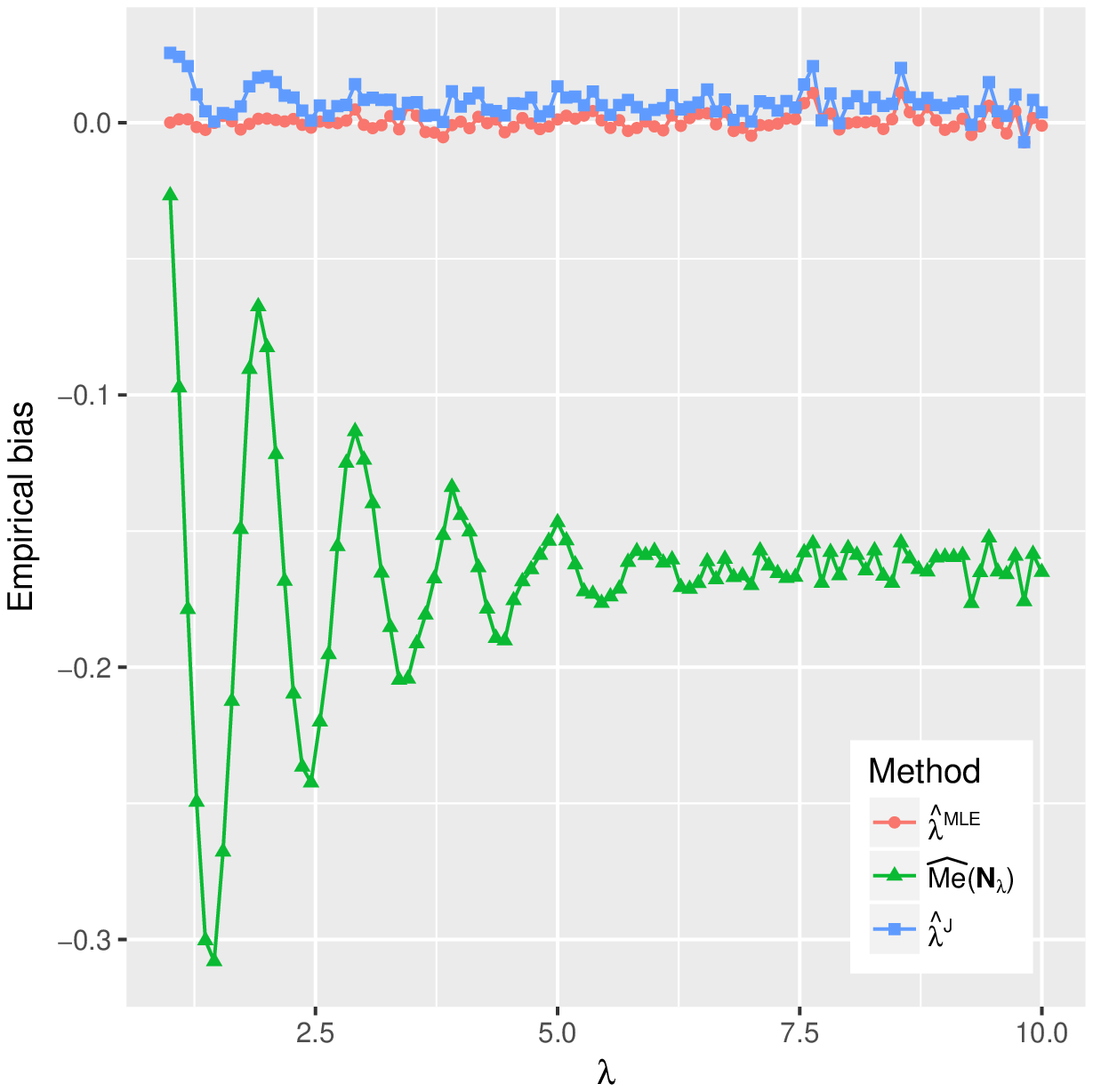}} & 
\subfloat[{$n=200$}]{\includegraphics[scale=.45]{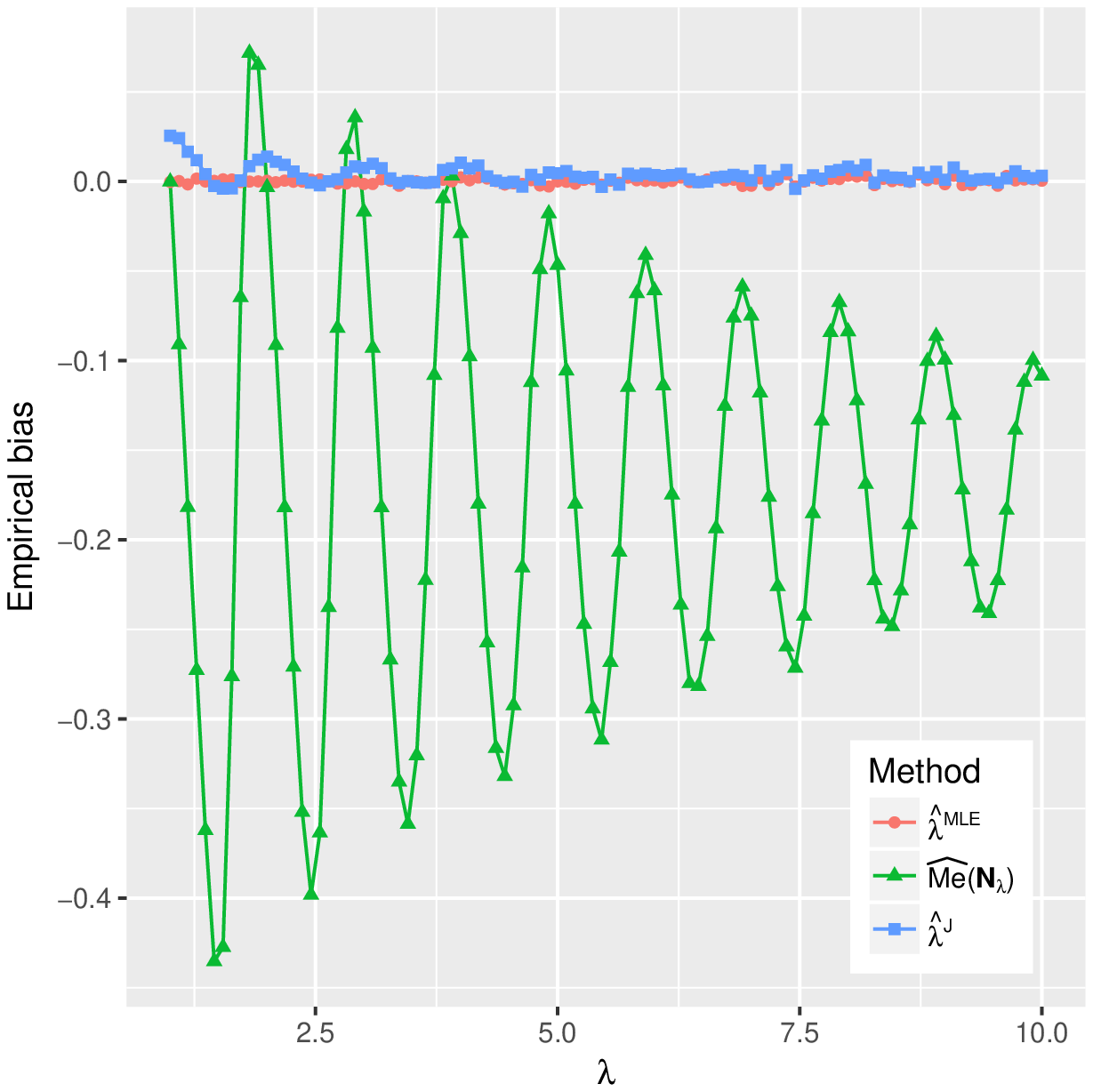}} \\  
\end{tabular}
\caption{\label{fig2} Empirical biases for estimators of $\lambda$ based on $10000$ replications of samples of size $n$ from a Poisson distribution with parameter $\lambda$. 100 values of $\lambda$ between 1 and 10 are considered.}
\end{figure}

\begin{figure}[h]
\centering\begin{tabular}{cc}
\subfloat[{$n=50$}]{\includegraphics[scale=.45]{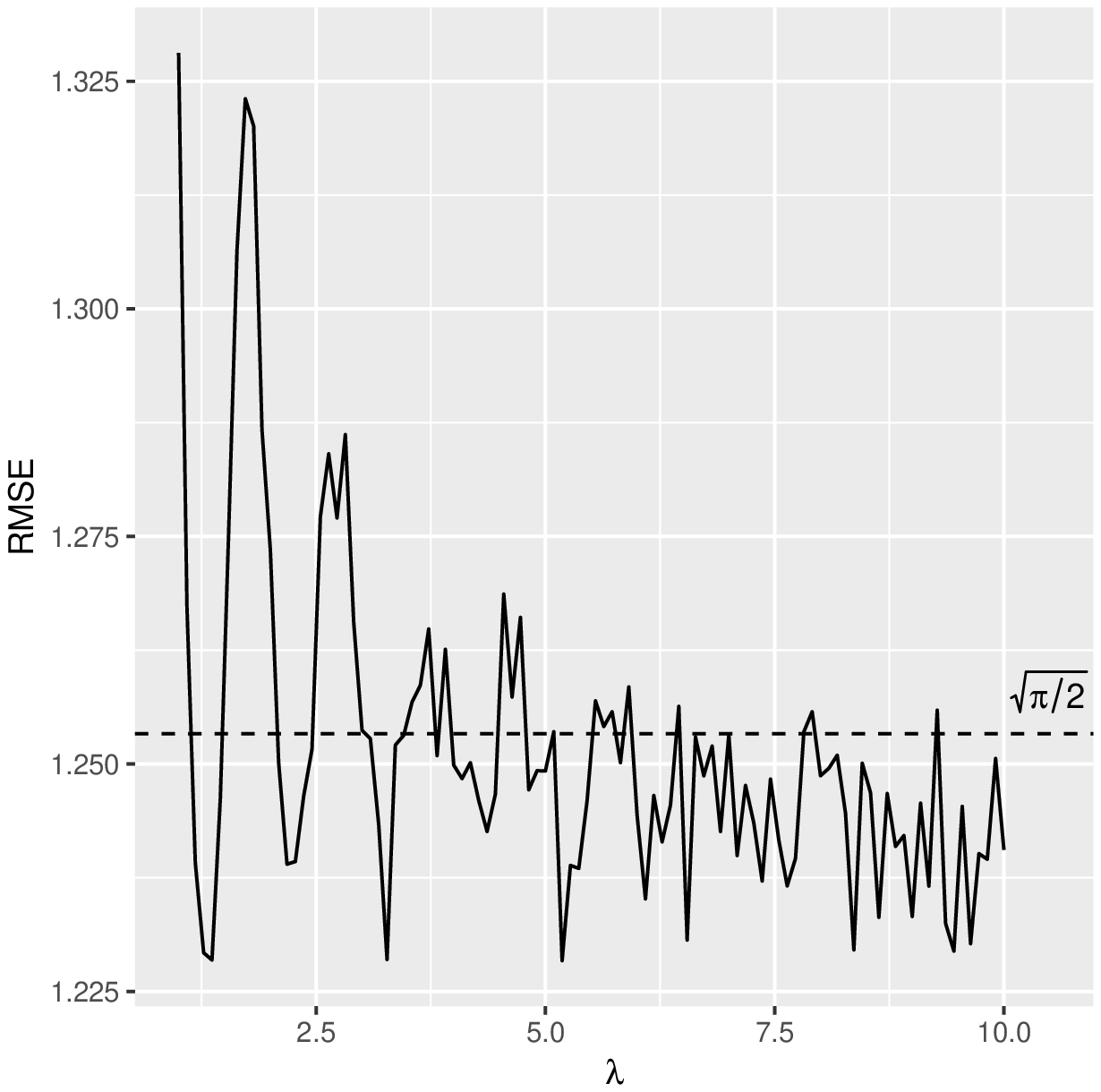}} &  
\subfloat[{$n=200$}]{\includegraphics[scale=.45]{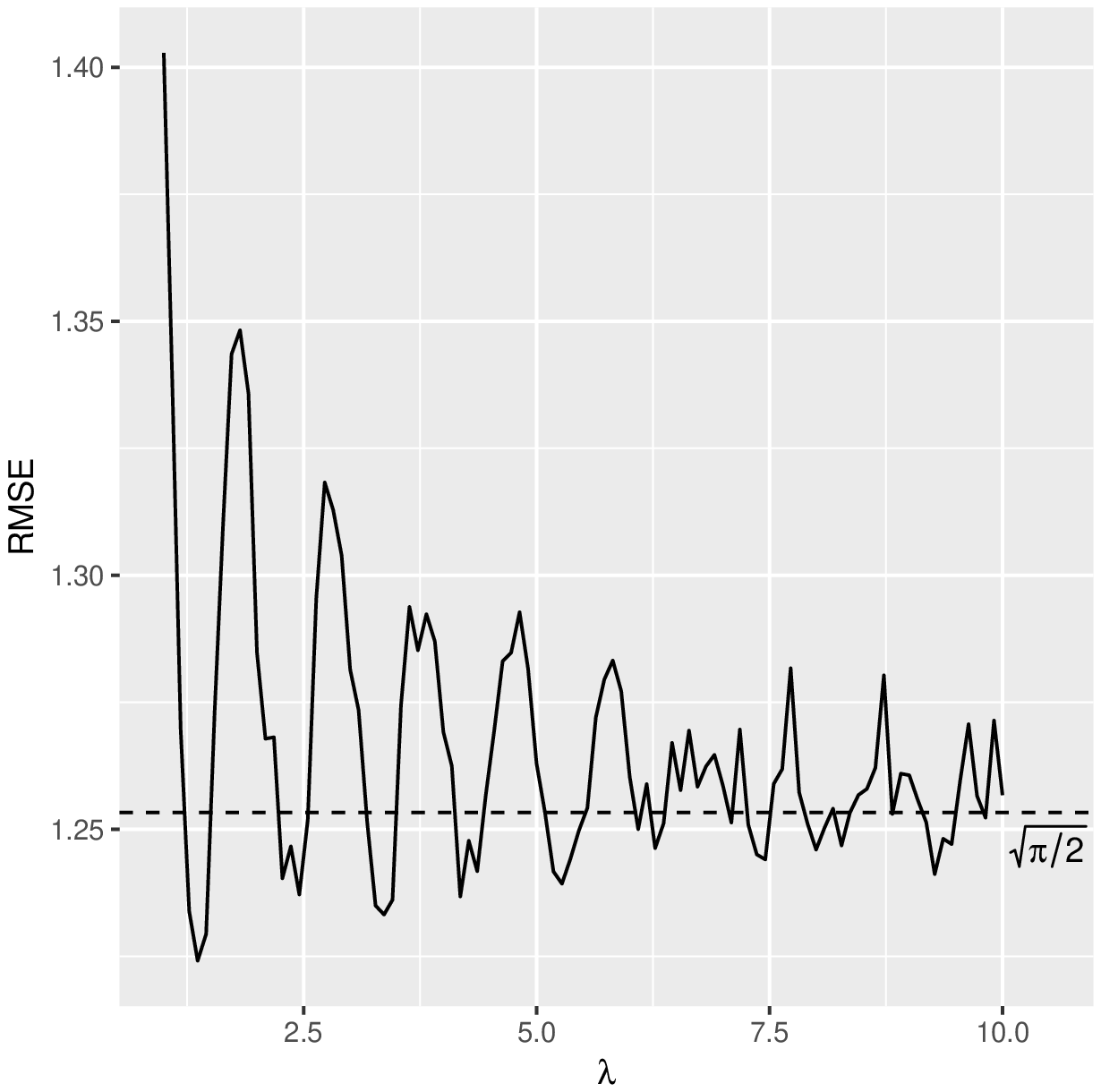}} \\ 
\end{tabular}
\caption{\label{fig3} Ratio of RMSE of the jittered estimate $\hat \lambda^\mathrm{J}$ to the one of the maximum likelihood estimate. The simulation is based, for each $\lambda$, on $10000$ replications of samples of size $n$ from a Poisson distribution with parameter $\lambda$. 100 values of $\lambda$ between 1 and 10 are considered.}
\end{figure}

\begin{figure}[htbp]
\centering\begin{tabular}{cc}
\subfloat[{$n=50$}]{\includegraphics[scale=.45]{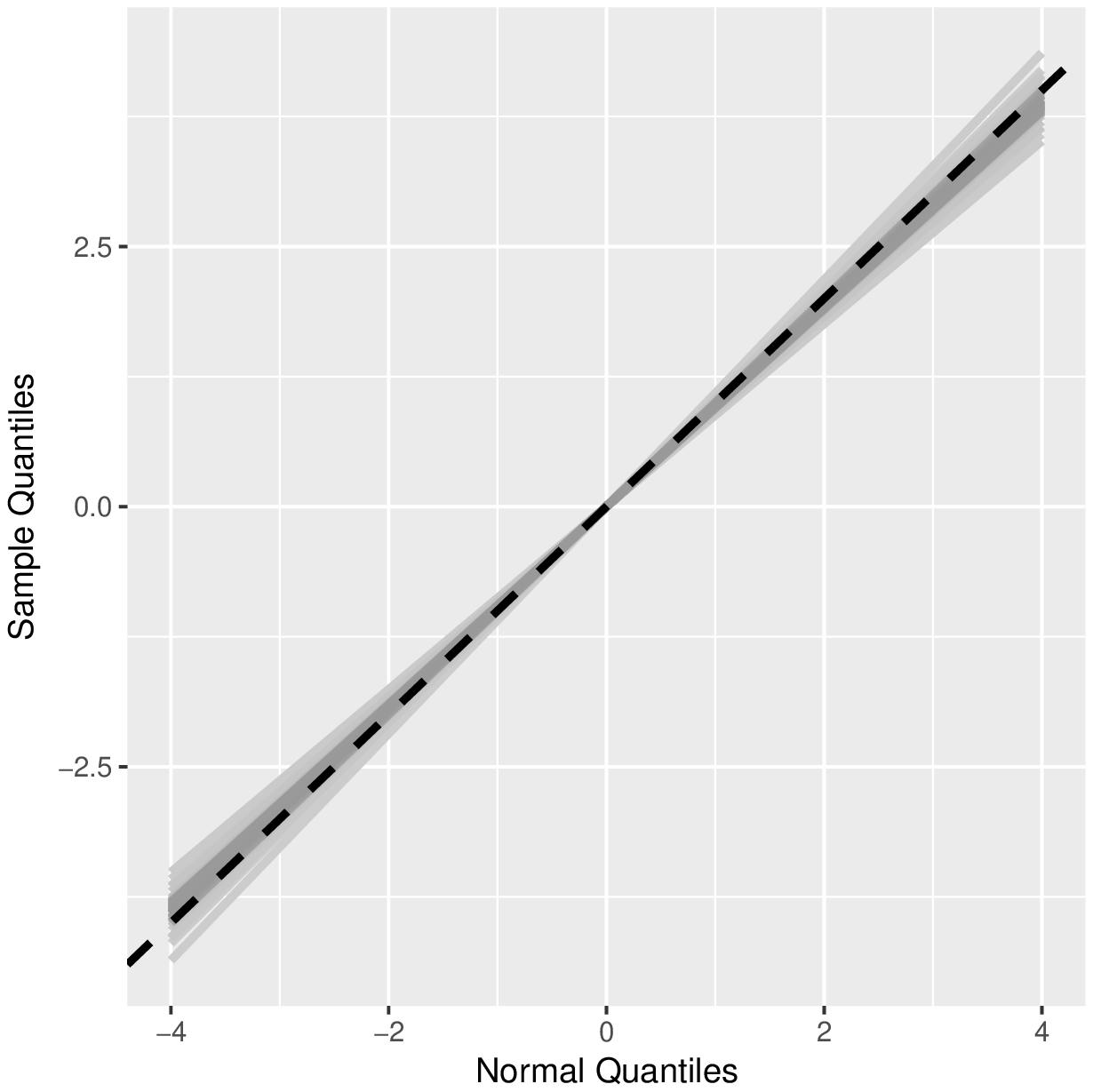}} &  
\subfloat[{$n=200$}]{\includegraphics[scale=.45]{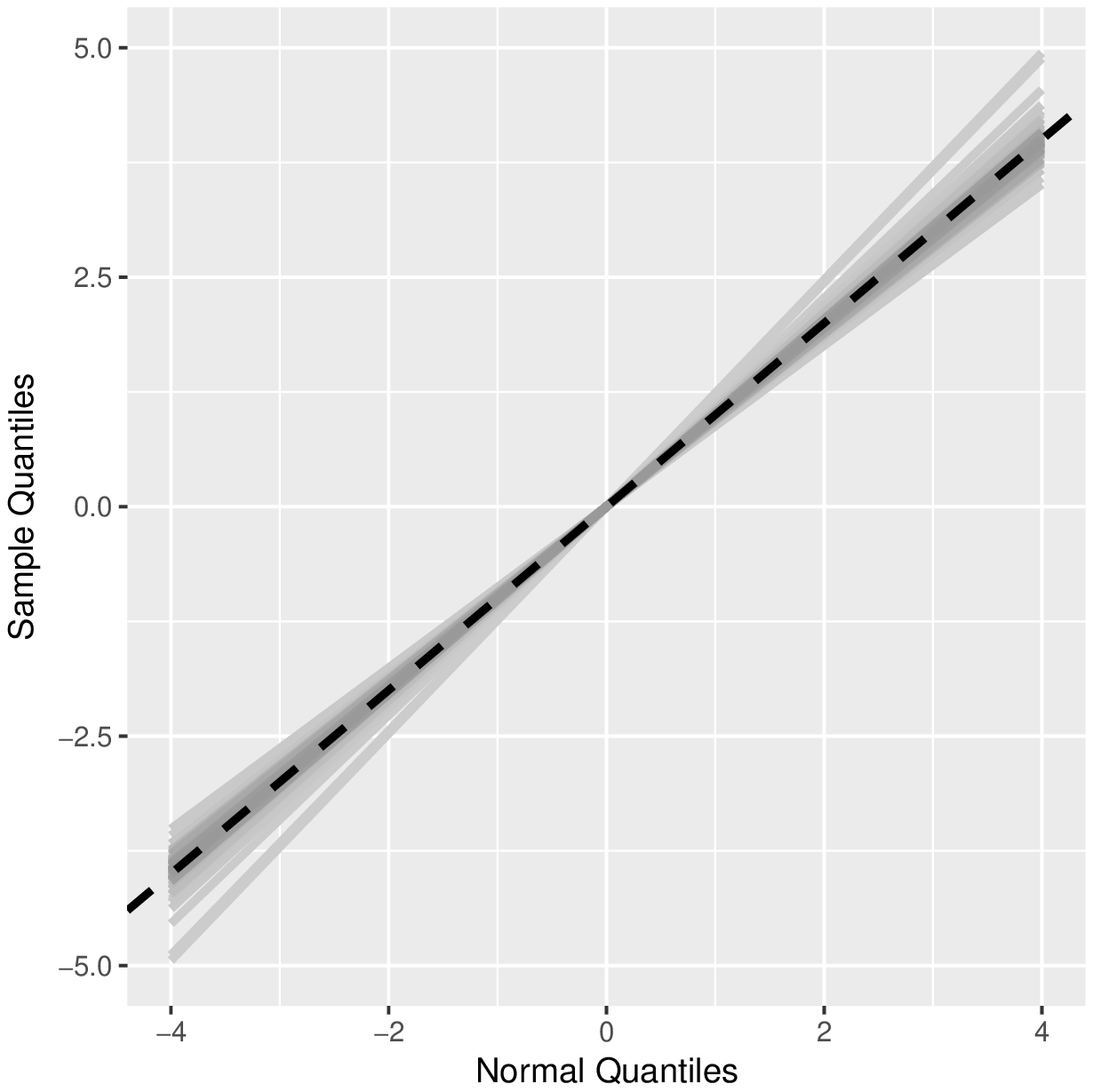}} \\ 
\end{tabular}
\caption{\label{fig4} Fitted lines of normal probability plots obtained for each value of $\lambda$ for the random variable $\Delta_\lambda$ given by~\eqref{eq:Deltal}. The simulation is based, for each $\lambda$, on $10000$ replications of samples of size $n$ from a Poisson distribution with parameter $\lambda$. 100 values of $\lambda$ between 1 and 10 are considered.}
\end{figure}

\subsection{Simulation study in presence of outliers}

In this section, we compare three estimators of $\lambda$: $\hat\lambda^{\mathrm{MLE}}$ which serves as a baseline, $\hat \lambda^\mathrm{J}$ and the Tukey's modifed estimator proposed by~\cite{elsaied:fried:16}. This estimator, denoted by $\hat \lambda^{\mathrm{Tukey}}$ in the sequel, is an $M$-estimator, see e.g. \cite{vandervaart}, with objective function 
\begin{equation}
\psi_{k,a}(y,\lambda)=\left(\frac{y-\lambda}{\sqrt{\lambda}}-a\right)\left(k^2-\left(\frac{y-\lambda}{\sqrt{\lambda}}\right)^2\right)^2\mathrm{I}_{[-k,k]}\left(\frac{y-\lambda}{\sqrt{\lambda}}-a\right)
\label{Tukeymod}
\end{equation}
where $a=a(\lambda,k)$ is such that $\mathrm{E}(\psi_{k,a}(Y,\lambda))=0.$ \cite{elsaied:fried:16} considered several other estimators (other version of $M$-type estimators, weighted likelihood type estimators, etc), made an extensive simulation study and concluded that in many situations ${\hat \lambda}^{\mathrm{Tukey}}$ was the best one. To tune the constant $k$ and thus the corrective term $a=a(\lambda,k)$, we follow the suggestion by~\cite{elsaied:fried:16} and set the constant $k=6$. Given a first estimate of $\lambda$, a first corrective term $a$ is found which serves as a first $M$-estimation and so on. The algorithm is stopped when the difference between two successive estimates of $\lambda$ does not exceed $10^{-4}$.

The simulation model we consider is an additive outliers type model  where we assume to observe $\widetilde N_{i,\lambda}$ given by  
\begin{equation}\label{eq:AO}
\widetilde N_{i,\lambda} = N_{i,\lambda} + (1-\varepsilon_i) \sqrt h, \qquad  i=1,\dots,n,  
\end{equation}
 where $P(\varepsilon_i=1)=1-\pi$ and where $\pi$ corresponds to the proportion of outliers and $h$ is a constant. For a given $\pi$, we consider the signal-to-noise ratio defined in decibels by $SNR=10\log_{10}(\frac{\lambda}{h\pi(1-\pi)})$ and set $\sqrt h$ (as an integer value) such that $\mathrm{SNR}=-10$ (db). 
    
Figure~\ref{figAO} reports empirical biases and RMSE in terms of \linebreak $\pi\in\{0\%,1\%,5\%, 10\%, 20\%\}$ for estimators of $\lambda=5$. These Monte-Carlo results are based on 10000 replications from the model~\eqref{eq:AO} with $n=50$ and $n=200$. As expected, the MLE gets quickly biased as soon as $\pi>0$ which makes its RMSE very high. When $\pi$ is not too large, $\hat \lambda^\mathrm{J}$ seems less biased than $\hat \lambda^\mathrm{Tukey}$. However, the latter one is more efficient which explains why the RMSE of the $\hat \lambda^\mathrm{Tukey}$ is smaller than the one of $\hat \lambda^\mathrm{J}$.  It is to be noticed that this difference tends to decrease when $n$ increases. When $\pi=10\%$ or $20\%$, $\hat \lambda^\mathrm{Tukey}$ is much less biased than $\hat \lambda^\mathrm{J}$ and still has a smaller variance. 

As a conclusion of this simulation study, it turns out that the naive and very simple estimator $\hat \lambda^\mathrm{J}$ behaves nicely compared to very efficient estimators such as the Tukey's modified estimator, when the proportion of outliers is low. When this proportion increases, the performances of the median-based estimator $\hat \lambda^\mathrm{J}$ degrade, whereas $\hat \lambda^\mathrm{Tukey}$ still remains efficient.
\begin{figure}[htbp]
\centering\begin{tabular}{cc}
\subfloat[{$n=50$}]{\includegraphics[scale=.45]{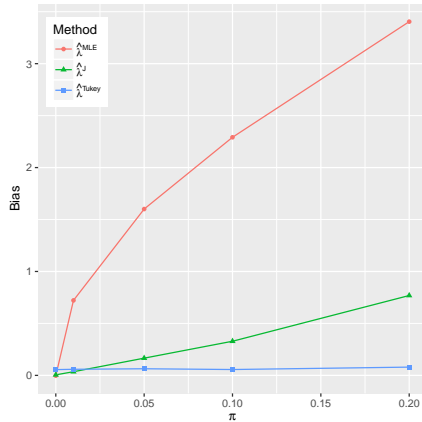}} & 
\subfloat[{$n=200$}]{\includegraphics[scale=.45]{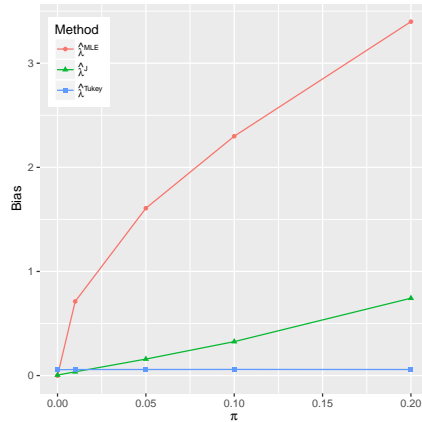}} \\  
\subfloat[{$n=50$}]{\includegraphics[scale=.45]{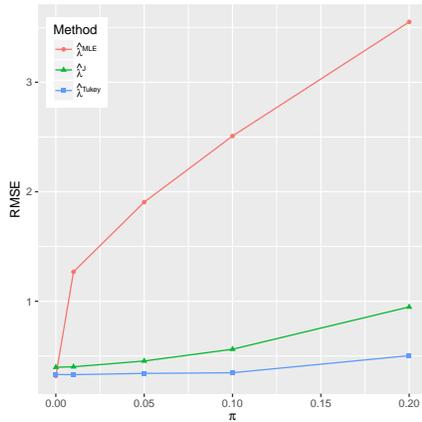}} & 
\subfloat[{$n=200$}]{\includegraphics[scale=.45]{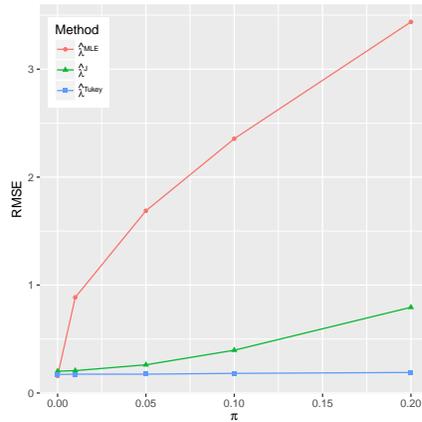}} \\  
\end{tabular}
\caption{\label{figAO} Empirical biases ((a)-(b)) and RMSE ((c)-(d)) for estimators of $\lambda=5$ based on $10000$ replications of samples of size $n=50$ and 
$n=200$ from the model~\eqref{eq:AO}. Results are reported in terms of $\pi$ the proportion of outliers.}
\end{figure}

\subsection{Computational time}

Table~\ref{tab:time} reports average computational time of four estimators of $\lambda$ ($\hat \lambda^\mathrm{MLE}$, $\Mee(\bN_\lambda)$, $\hat \lambda^\mathrm{J}$ and $\hat \lambda^\mathrm{Tukey}$) in terms of the sample size.  The estimates are implemented in \texttt{R} on a laptop using a 2,9 GHz Intel Core i5 process.  The MLE is obviously the cheapest one. Evaluating $\hat \lambda^\mathrm{J}$ is approximately twice more expensive than evaluating $\Mee(\bN_\lambda)$. This factor is due to the generation of uniform distributions. However, the computational time is very reasonable even for very large datasets compared to the Tukey's modified estimate which is unfeasible for $n=10^9$ due to memory storage.

\begin{table}[htbp]
\begin{center}
\begin{tabular}{lcccccc}
\hline
& \multicolumn{5}{c}{Sample size $n$} \\
& $10^4$ &$10^5$ &$10^6$ &$10^7$ &$10^8$ & $10^9$\\
\hline
$\hat \lambda^\mathrm{MLE}$ &    $0^\star$ & $0^\star$ &$0^\star$  &$0^\star$ &   0.1  & 1.2\\
$\Mee(\bN_\lambda)$ &  $0^\star$ &   $0^\star$ &   $0^\star$ &0.2& 2.8 & 20.1\\
$\hat \lambda^\mathrm{J}$ &$0^\star$ &$0^\star$ &0.1 & 0.5 & 5.5 & 48.2\\
$\hat \lambda^\mathrm{Tukey}$& $0^\star$ &0.1 & 0.8 & 8.2 & 98.2 & NA \\
\hline
\end{tabular}
\caption{\label{tab:time}Average computational time in seconds (based on 10 replications) to evaluate each estimate based on a sample of size $n$ from a Poisson distribution with parameter $\lambda=\pi$. The notation $0^\star$ means that the average time is smaller than 0.05 sec.} 
\end{center}
\end{table}

\section*{Acknowledgements}

The authors are 
sincerely grateful to H. Elsaied and R. Fried for discussions and for sharing the R code implementing the Tukey's modified $M$-estimator. The research of J.-F. Coeurjollly is supported by the Natural Sciences and Engineering Research Council of Canada.

\bibliographystyle{plain}


\bigskip

\appendix

\section{Technical lemmas}

Appendix gathers technical lemmas, used in the proof of Proposition~1.

\begin{lemma}
  \label{lemma1}
  Let $k\in \R$, $x\in [0,1)$ and $r_n(x,k) = k/(n+x)$. Then the sequence $w_n(x,k)$ reads as follows:\\
  (i) If $x+r_n(x,k) \in [-1/3,2/3)$
  \begin{equation}
  \label{eq:wn_case1}
  w_n(x,k)= \P\left(N_{n+x}\leq n \right)+\left(x-\frac{2}{3}+\frac{k}{n+x}\right)\P\left(N_{n+x}=n\right).
  \end{equation}
  (ii) If $x+r_n(x,k)\in [2/3,5/3)$
  \begin{equation}\label{eq:wn_case2}
  w_n(x,k)= \P\left(N_{n+x}\leq n\right)+\left(x-\frac{2}{3}+\frac{k}{n+x}\right)\P(N_{n+x}=n+1).   
  \end{equation} 
\end{lemma}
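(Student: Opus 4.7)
The lemma is obtained by a direct evaluation of $w_n(x,k)=F_{Z_{n+x}}(t_n)$ at the point $t_n := n + x + \tfrac13 + k/(n+x)$; the two cases correspond simply to the two possible values of $\lfloor t_n \rfloor$.

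The only ingredient needed is an explicit form of $F_{Z_\lambda}$. Conditioning on $N_\lambda=j$ and using that $U\sim\mathcal U((0,1))$,
\begin{equation*}
\P(Z_\lambda\le t\mid N_\lambda=j)=\begin{cases}1,&j\le\lfloor t\rfloor-1,\\ t-\lfloor t\rfloor,&j=\lfloor t\rfloor,\\ 0,&j\ge\lfloor t\rfloor+1,\end{cases}
\end{equation*}
so that summing over $j$ yields
\begin{equation*}
F_{Z_\lambda}(t)=\P(N_\lambda\le\lfloor t\rfloor-1)+(t-\lfloor t\rfloor)\,\P(N_\lambda=\lfloor t\rfloor).
\end{equation*}

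It then remains to identify $\lfloor t_n\rfloor$ and $\{t_n\}$ in each regime. In case (i), the hypothesis $x+r_n(x,k)\in[-\tfrac13,\tfrac23)$ is equivalent to $x+\tfrac13+r_n\in[0,1)$, so $\lfloor t_n\rfloor=n$ and $\{t_n\}=x+\tfrac13+r_n$; substituting into the cdf formula and using $\P(N_{n+x}\le n-1)=\P(N_{n+x}\le n)-\P(N_{n+x}=n)$ gives \eqref{eq:wn_case1}. In case (ii), $x+r_n\in[\tfrac23,\tfrac53)$ translates into $x+\tfrac13+r_n\in[1,2)$, so $\lfloor t_n\rfloor=n+1$ and $\{t_n\}=x-\tfrac23+r_n$, and \eqref{eq:wn_case2} drops out directly.

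There is no real obstacle here; the only points requiring care are tracking the off-by-one between $\P(N_{n+x}\le\lfloor t_n\rfloor-1)$ and $\P(N_{n+x}\le\lfloor t_n\rfloor)$ in case (i), and verifying that the two intervals prescribed for $x+r_n$ are exactly those on which $\lfloor t_n\rfloor$ takes the values $n$ and $n+1$ respectively.
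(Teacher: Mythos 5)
Your proof is correct and is essentially the paper's intended argument: the paper omits the proof, stating it ``derives easily from~\eqref{eq:FZlambda}'', and your direct evaluation of $F_{Z_{n+x}}$ at $t_n=n+x+\tfrac13+r_n(x,k)$ with the case analysis of $\lfloor t_n\rfloor\in\{n,n+1\}$ is exactly that derivation. The only remark worth making is that your conditioning yields $F_{Z_\lambda}(t)=\P(N_\lambda\le\lfloor t\rfloor-1)+(t-\lfloor t\rfloor)\P(N_\lambda=\lfloor t\rfloor)$, which is the correct form (the upper summation limit $\lfloor t\rfloor$ displayed in~\eqref{eq:FZlambda} is an off-by-one slip), and the lemma's two formulas are consistent with your corrected version.
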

Let $k\in \R$, then for $n$ sufficiently large, if $x\in [0,2/3)$ or $x=2/3$ and $k<0$, then $w_n(x,k)$ reads as in~\eqref{eq:wn_case1}. In the same way, if $x\in (2/3,1)$ or $x=2/3$ and $k\ge 0$, then $w_n(x,k)$ reads as in~\eqref{eq:wn_case2}. The proof of Lemma~\ref{lemma1} is omitted as it derives easily from~\eqref{eq:FZlambda}.

\begin{lemma}
  \label{lemma2}
  Let $k\in \R$, $x\in [0,1)$ and $r_n(x,k) = k/(n+x)$. Let $\Delta_n(x,k)$ be given by
  \[
  \Delta_n(x,k) = \frac{(n+1)!}{g_{n+1}(n+1+x)} \bigg( w_{n+1}(x,k) - w_{n}(x,k) \bigg).
  \]
  There exists $n_0 \in \N$ such that for all $n\geq n_0$, we have the two following cases:\\
  (i) If $x+r_{n}(x,k) \in [-1/3,2/3)$
  \begin{align}
  \Delta_n(x,k) =& \;c_n(0,x)-\int_0^1 c_n(v,x)\dd v + \left(x-\frac23 \right)\left( 1-c_n(0,x) \frac{n+1}{n+x}\right) \nonumber \\
  &+ r_{n+1}(x,k)- \frac{n+1}{n+x} c_n(0,x)r_n(x,k) \label{eq:Delta1_case1}
  \end{align}
  where $c_n(\cdot,x):[0,1]\to \R$ is defined by 
  \[
  c_n(v,x)=  \left( \frac{n+v+x}{n+1+x} \right)^{n+1} \exp(1-v).  
  \]
  (ii) If $x+r_{n}(x,k) \in [2/3,5/3)$
  \begin{align}
  \Delta_n(x,k) =& c_n(0,x) -\int_0^1 c_n(v,x)\dd v +\left(x-\frac23 \right)\left( \frac{n+1+x}{n+2}-c_n(0,x) \right) \nonumber \\
  &+ \frac{n+1+x}{n+2}r_{n+1}(x,k)-  c_n(0,x)r_n(x,k). \label{eq:Delta1_case2}
  \end{align}
\end{lemma}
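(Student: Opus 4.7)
The plan is to expand the difference $w_{n+1}(x,k)-w_n(x,k)$ using the two explicit expressions provided by Lemma~\ref{lemma1}, and then collect terms against the target formulas~\eqref{eq:Delta1_case1}--\eqref{eq:Delta1_case2}. Since $r_n(x,k)=k/(n+x)\to 0$, for all $n$ large enough the quantities $x+r_n(x,k)$ and $x+r_{n+1}(x,k)$ lie in the same interval, so $w_n$ and $w_{n+1}$ are simultaneously of type~(i) or simultaneously of type~(ii) of Lemma~\ref{lemma1}. I would therefore treat the two cases in parallel.

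In either case one writes
\[
w_{n+1}(x,k)-w_n(x,k) \;=\; A_n + B_n + C_n,
\]
with $A_n = \P(N_{n+1+x}\le n+1)-\P(N_{n+x}\le n)$ identical in both cases, while $B_n$ and $C_n$ collect the coefficients of $x-2/3$ and of $r_{n+1}(x,k),r_n(x,k)$, involving Poisson masses $\P(N_{n+x}=m_n)$ and $\P(N_{n+1+x}=m_{n+1})$, where $m_j=j$ in case~(i) and $m_j=j+1$ in case~(ii). The scaling factor $(n+1)!/g_{n+1}(n+1+x)$ turns these three masses into clean constants: direct calculation based on $c_n(0,x)=\bigl(\tfrac{n+x}{n+1+x}\bigr)^{n+1}e$ yields
\[
\tfrac{(n+1)!}{g_{n+1}(n+1+x)}\P(N_{n+1+x}=n+1)=1,\qquad \tfrac{(n+1)!}{g_{n+1}(n+1+x)}\P(N_{n+x}=n)=\tfrac{n+1}{n+x}c_n(0,x),
\]
and $\tfrac{(n+1)!}{g_{n+1}(n+1+x)}\P(N_{n+x}=n+1)=c_n(0,x)$; these three identities immediately produce the non-integral terms of~\eqref{eq:Delta1_case1} and~\eqref{eq:Delta1_case2}, including in particular the ratio $\tfrac{n+1+x}{n+2}$ appearing in case~(ii).

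The delicate step is the treatment of $A_n$. Splitting $\P(N_{n+1+x}\le n+1)=\P(N_{n+1+x}\le n)+\P(N_{n+1+x}=n+1)$ and applying the standard identity $\tfrac{\dd}{\dd\lambda}\P(N_\lambda\le n)=-g_n(\lambda)/n!$ yields
\[
A_n \;=\; \P(N_{n+1+x}=n+1)\;-\;\int_0^1 \frac{g_n(n+x+v)}{n!}\,\dd v.
\]
After scaling, the integrand becomes $\tfrac{n+1}{n+v+x}c_n(v,x)$, and the key observation is that logarithmic differentiation gives $\partial_v c_n(v,x)=\tfrac{1-v-x}{n+v+x}c_n(v,x)$, so that
\[
\tfrac{n+1}{n+v+x}c_n(v,x) \;=\; c_n(v,x)+\partial_v c_n(v,x).
\]
Integrating over $[0,1]$ and using $c_n(1,x)=1$ produces $\int_0^1 c_n(v,x)\,\dd v+1-c_n(0,x)$, whence the scaled contribution of $A_n$ is $c_n(0,x)-\int_0^1 c_n(v,x)\,\dd v$ in both cases, matching the leading pair of terms in~\eqref{eq:Delta1_case1} and~\eqref{eq:Delta1_case2}.

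The main obstacle is bookkeeping rather than any new idea: one must correctly track the mass points $m_n$ and carefully combine the scaled $B_n$ and $C_n$ so that the coefficients of $x-2/3$, $r_{n+1}(x,k)$ and $r_n(x,k)$ align with those in~\eqref{eq:Delta1_case1} for case~(i) and~\eqref{eq:Delta1_case2} for case~(ii). Once the identity $\tfrac{n+1}{n+v+x}c_n(v,x)=c_n+\partial_v c_n$ is in hand, summing $A_n$, $B_n$, $C_n$ after scaling produces the two announced expressions, completing the lemma.
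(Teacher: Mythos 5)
Your proposal is correct and follows essentially the same route as the paper: decompose $w_{n+1}(x,k)-w_n(x,k)$ via Lemma~\ref{lemma1} into the cdf difference $\P(N_{n+1+x}\le n+1)-\P(N_{n+x}\le n)$ plus the mass-point terms, rescale by $(n+1)!/g_{n+1}(n+1+x)$, and identify the scaled Poisson masses as $1$, $c_n(0,x)$, $\tfrac{n+1}{n+x}c_n(0,x)$ and $\tfrac{n+1+x}{n+2}$ (this last identity, needed for $\P(N_{n+1+x}=n+2)$ in case~(ii), you use but never state explicitly). The only variation is in the cdf difference: the paper invokes the Poisson--Gamma relation $\P(N_\lambda\le n)=\tfrac{1}{n!}\int_\lambda^\infty g_n(u)\,\dd u$ and shifts the index from $g_n$ to $g_{n+1}$, whereas you differentiate the cdf in $\lambda$ and use $\tfrac{n+1}{n+v+x}c_n(v,x)=c_n(v,x)+\partial_v c_n(v,x)$ with $c_n(1,x)=1$, which is the same integration by parts written pointwise; both yield $c_n(0,x)-\int_0^1 c_n(v,x)\,\dd v$.
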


\begin{proof}
  
  (i) Using the Poisson-Gamma relation $\P(N_\lambda \le n)=\frac{1}{n!}\int_\lambda^\infty g_n(u)\mathrm{d}u$ with $g_n(u)=\mathrm{e}^{-u}u^n$ and Lemma~\ref{lemma1}~(i), we can rearrange the difference $w_{n+1}(x,k)-w_n(x,k)$ as
\begin{align*}
  w_{n+1}(x,k)-w_n(x,k)
  &=\frac{1}{(n+1)!}\int_{n+x}^{n+1+x}-g_{n+1}(u)du+\frac{g_{n+1}(n+x)}{(n+1)!}\\
  &\qquad+\left(x-\frac{2}{3}+r_{n+1}(x,k)\right)\P\left(N_{n+1+x}=n+1\right)\\
  &\qquad-\left(x-\frac{2}{3}+r_n(x,k)\right)\P\left(N_{n+x}=n\right),
\end{align*}
which leads to the result after little algebra by noticing that
\[
\P(N_{n+x}=n)=\frac{(n+1)(n+x)^n\mathrm{e}^{-(n+x)}}{(n+1)!}=\frac{(n+1)g_n(n+x)}{(n+1)!}.  
\]
(ii) Using the Poisson-Gamma relation and Lemma~\ref{lemma1}~(ii), we can rearrange the difference $w_{n+1}(x,k)-w_n(x,k)$ as 
\begin{align*}
  w_{n+1}(x,k)-w_n(x,k)
  =&\frac{1}{(n+1)!}\int_{n+1+x}^{\infty}\mathrm{e}^{-u}u^{n+1}du-\frac{1}{(n+1)!}\int_{n+x}^{\infty}\mathrm{e}^{-u}u^{n+1}du\\
  &+\left(x-\frac{2}{3}+r_{n+1}(x,k)\right)\P\left(N_{n+1+x}=n+2\right)\\
  &-\left(x-\frac{2}{3}+r_n(x,k)\right)\P\left(N_{n+x}=n+1\right),
  \end{align*}
which leads to the result after little algebra by noticing that
  \begin{equation*}
  \P(N_{n+1+x}=n+2)=\frac{1}{(n+1)!}g_{n+1}(n+1+x)\frac{n+1+x}{(n+2)}
  \end{equation*}
  and
  \begin{equation*}
  \P(N_{n+x}=n+1)=\frac{1}{(n+1)!}g_{n+1}(n+1+x)c_n(0,x).
  \end{equation*}
\end{proof}

\begin{lemma}\label{lemma3}
  Let $k\in \R$, $x\in [0,1)$ and $r_n(x,k) = k/(n+x)$, for any $k \in \R$ and $x\in [0,1)$, then for $n$ sufficiently large we have
  \begin{equation*}
  \Delta_n(x,k) = \frac{3}{2(n+1+x)^2} \big( \mathcal H(x) - k \big) \; + \; o\left(\frac1{n^2}\right)
  \end{equation*}
  where $\mathcal H$ is the function given by~\eqref{eq:H}. 
\end{lemma}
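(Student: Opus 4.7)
The plan is to set $\beta=n+1+x$ and Taylor expand each quantity appearing in Lemma~\ref{lemma2} in powers of $1/\beta$, retaining all terms through order $1/\beta^2$ with an $o(1/n^2)$ remainder. By Lemma~\ref{lemma1} (and the remark following it), for $n$ large enough we are in case~(i) of Lemma~\ref{lemma2} when $x\in[0,2/3)$ (or $x=2/3$, $k<0$) and in case~(ii) when $x\in(2/3,1)$ (or $x=2/3$, $k\ge 0$). The two cases will be carried out in parallel with essentially the same machinery.

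The central ingredient is the expansion of $c_n(v,x)$. Writing $\alpha=v-1$ and $n+1=\beta-x$,
\[
c_n(v,x)=\exp\!\Big((\beta-x)\log\!\big(1+\tfrac{\alpha}{\beta}\big)+(1-v)\Big).
\]
Expanding the logarithm to $O(\alpha^4/\beta^4)$, the leading $\alpha$ produced by $(\beta-x)\cdot(\alpha/\beta)$ cancels with $(1-v)=-\alpha$, and one obtains
\[
c_n(v,x)=1-\frac{\alpha x+\alpha^2/2}{\beta}+\frac{Q(\alpha,x)}{\beta^2}+O(1/\beta^3)
\]
for an explicit polynomial $Q$. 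Specialising to $\alpha=-1$ gives $c_n(0,x)$, and integrating over $\alpha\in[-1,0]$ gives $\int_0^1 c_n(v,x)\,\dd v$; both become polynomials in $x$ divided by powers of $\beta$, computable to the required order. One also needs the routine geometric expansions of the rational factors $(n+1)/(n+x)$ (case~(i)) and $(n+1+x)/(n+2)$ (case~(ii)), together with $r_n(x,k)=k/(\beta-1)$ and $r_{n+1}(x,k)=k/\beta$.

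With these ingredients in hand, the plan is to substitute everything into the Lemma~\ref{lemma2} formulas and verify, separately for each case: (a) the $O(1/\beta)$ coefficients cancel identically, which must happen since $\Delta_n$ is claimed to be of order $1/\beta^2$; (b) the $k$-dependent piece, coming entirely from the last two terms of Lemma~\ref{lemma2}, collapses to $-3k/(2\beta^2)$ in both cases; and (c) the remaining $x$-dependent $O(1/\beta^2)$ coefficient matches $(3/2)\mathcal{H}(x)$ on the appropriate branch. Concretely, for $x\in[0,2/3)$ this coefficient should reduce to $(x^3-x^2)/2+2/45$, equal to $(3/2)[x^2(x-1)/3+4/135]$, while for $x\in[2/3,1)$ it should reduce to $x^3/2-2x^2+5x/2-43/45$, equal to $(3/2)[(x/3)(x^2-4x+5)-86/135]$.

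The main obstacle is sheer bookkeeping: many truncated series must be multiplied out and combined, and the first-order cancellation is not \emph{a priori} obvious. A useful sanity check along the way is that the two branch formulas for the $O(1/\beta^2)$ coefficient must agree at $x=2/3$ (both give $-4/135$), consistent with the continuity of $\mathcal{H}$ asserted in Theorem~\ref{thm:1}.
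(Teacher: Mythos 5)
Your proposal takes essentially the same route as the paper, which packages exactly these expansions of $c_n(v,x)$, $c_n(0,x)$, $\int_0^1 c_n(v,x)\,\dd v$, the rational factors $(n+1)/(n+x)$ and $(n+1+x)/(n+2)$, and the $r_n$-terms into its auxiliary Lemma~\ref{lemma4}, then combines them case by case via Lemma~\ref{lemma2}; the intermediate targets you state (cancellation of the $1/(n+1+x)$ terms, the $-3k/2(n+1+x)^{-2}$ contribution in both cases, and the second-order coefficients $(x^3-x^2)/2+2/45$ and $x^3/2-2x^2+5x/2-43/45$, agreeing at $x=2/3$) are precisely the correct ones. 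Although the bookkeeping is left to be executed, the plan is correct and matches the paper's proof in substance.
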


\begin{proof}
Let $k\in \R$, then for any $x\in [0,1)$, there exists $n_0\in \N$ such that for $n\ge n_0$, either (i) $x+r_{n}(x,k)\in[-1/3,2/3)$ or (ii) $x+r_n(x,k)\in[2/3,5/3)$. In the sequel, we consider both cases and expand the expression of $\Delta_n(x,k)$ given by Lemma~\ref{lemma2}. The following expansions extensively make use of auxiliary results gathered in Lemma~\ref{lemma4}.  \\
(i) Case $x+r_{n}(x,k)\in[-1/3,2/3)$.
\begin{align*}
\Delta_n(x,k)
&=\frac{1}{n+1+x}\left(-\frac{x}{2}+\frac{1}{6}\right)+\frac{1}{(n+1+x)^2}\left(-\frac{x^2}{6}-\frac{x}{24}+\frac{7}{120}\right)\\
&\qquad+\frac{1}{n+1+x}\left(x-\frac{1}{2}\right)+\frac{1}{(n+1+x)^2}\left(\frac{x^2}{2}-\frac{5}{24}\right)\\
&\qquad+\left(x-\frac{2}{3}\right)\left[\frac{-1/2}{n+1+x}+\frac{1}{(n+1+x)^2}\left(\frac{x^2}{2}-\frac{x}{2}-\frac{7}{24}\right)\right]\\
&\qquad-\frac{3k}{2}\frac{1}{(n+1+x)^2}+o\left(\frac{1}{n^2}\right)\\
&=\frac{3}{2(n+1+x)^2}\left(\frac{x^2(x-1)}{3}+\frac{4}{135}-k\right)+o\left(\frac{1}{n^2}\right)\\
&=\frac{3}{2(n+1+x)^2} \big( \mathcal H(x) - k \big) \; + \; o\left(\frac1{n^2}\right). 
\end{align*}
(ii) Case $x+r_{n}(x,k)\in[2/3,5/3)$.
\begin{align*}
\Delta_n(x,k)
&=\frac{1}{n+1+x}\left(-\frac{x}{2}+\frac{1}{6}\right)+\frac{1}{(n+1+x)^2}\left(-\frac{x^2}{6}-\frac{x}{24}+\frac{7}{120}\right)\\
&\qquad+\frac{1}{n+1+x}\left(x-\frac{1}{2}\right)+\frac{1}{(n+1+x)^2}\left(\frac{x^2}{2}-\frac{5}{24}\right)\\
&\qquad+\left(x-\frac{2}{3}+r_{n+1}(x,k)\right)\left(\frac{n+1+x}{n+2}\right)\\
&\qquad-\left(x-\frac{2}{3}+r_n(x,k)\right)c_n(0,x)\\
&=\frac{1}{(n+1+x)^2}\left(\frac{x^3}{2}-2x^2+\frac{5x}{2}-\frac{43}{45}-\frac{3k}{2}\right)+o\left(\frac{1}{n^2}\right)\\
&=\frac{3}{2(n+1+x)^2}\left(\frac{x^3}{3}-\frac{4x^2}{3}+\frac{5x}{3}-\frac{86}{135}-k\right)+o\left(\frac{1}{n^2}\right)\\
&=\frac{3}{2(n+1+x)^2} \big( \mathcal H(x) - k \big) \; + \; o\left(\frac1{n^2}\right).
\end{align*}
\end{proof}

\begin{lemma}\label{lemma4}
  Let $v,x \in [0,1]$, then we have the following expansions as $n \to \infty$:\\
  (i) 
  \begin{align}
  c_n(v,x)=&1+\frac{1}{n+1+x}\left(x(1-v)-\frac{(1-v)^2}{2}\right)\nonumber\\
  &+\frac{1}{(n+1+x)^2}\left(\frac{(1-v)^2}{2}(x^2+x)-(1-v)^3\left(\frac{x}{2}+\frac{1}{3}\right)
  +\frac{(1-v)^4}{8}\right) \nonumber\\
  &+o\left(\frac{1}{n^2}\right).  \label{eq:cnv}
  \end{align}
  (ii)
  \begin{align}
  c_n(0,x) &=1+\frac{1}{n+1+x}\left(x-\frac{1}{2}\right)+\frac{1}{(n+1+x)^2}\left(\frac{x^2}{2}-\frac{5}{24}\right)+o\left(\frac{1}{n^2}\right).  \label{eq:cn0}
  \end{align}
  (iii)
  \begin{align}
  \int_0^1c_n(v,x)dv=&1+\frac{1}{n+1+x}\left(\frac{x}{2}-\frac{1}{6}\right) \nonumber \\
  &+\frac{1}{(n+1+x)^2}\left(\frac{x^2}{6}+\frac{x}{24}-\frac{7}{120}\right)+o\left(\frac{1}{n^2}\right).  \label{eq:intcnv}
  \end{align}
  (iv) 
  \begin{equation}\label{eq:iv}
  1-c_n(0,x)\frac{n+1}{n+x}=\frac{-1/2}{n+1+x}+\frac{1}{(n+1+x)^2}\left[\frac{x^2}{2}-\frac{x}{2}-\frac{7}{24}\right]+o\left(\frac{1}{n^2}\right).  
  \end{equation}
  (v)
  \begin{equation}\label{eq:v}
  r_{n+1}(x,k)- \frac{n+1}{n+x} c_n(0,x)r_n(x,k)=\frac{-3k}{2}\frac{1}{(n+1+x)^2}+o\left(\frac{1}{n^2}\right).
  \end{equation}
  (vi)
  \begin{equation} \label{eq:vi}
  \frac{n+1+x}{n+2}-c_n(0,x)= \frac{1/2}{n+1+x} + \frac{1}{(n+1+x)^2} \left( \frac{x^2}2-2x -\frac{5}{24}\right) +o\left( \frac1{n^2} \right).
  \end{equation}
  (vii)
  \begin{equation} \label{eq:vii}
  \frac{n+1+x}{n+2}r_{n+1}(x,k)-  c_n(0,x)r_n(x,k) = 
  \frac{-3k}{2}\frac{1}{(n+1+x)^2}+o\left(\frac{1}{n^2}\right). 
  \end{equation}
  \label{resultatsintermediaires}
\end{lemma}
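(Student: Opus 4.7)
The plan is to prove the seven expansions in sequence: (i) is the engine, (ii) and (iii) follow from it by substitution and integration, and (iv)--(vii) reduce to combining (ii) with simple geometric series expansions. Throughout I will set $a = n+1+x$ and retain all expansions to order $a^{-2}$.

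For (i), I would write $(n+v+x)/(n+1+x) = 1 - (1-v)/a$ and apply $\log(1-\varepsilon) = -\sum_{j\ge 1} \varepsilon^j/j$ with $\varepsilon = (1-v)/a$, keeping terms through $\varepsilon^4$. Multiplying by $n+1 = a-x$ and adding the $(1-v)$ contributed by the factor $\exp(1-v)$ in the definition of $c_n(v,x)$, the leading $-(1-v)$ piece cancels, so the exponent $u$ of $c_n(v,x)$ begins at order $1/a$: I can read off its $1/a$ part as $x(1-v) - (1-v)^2/2$ and its $1/a^2$ part as $x(1-v)^2/2 - (1-v)^3/3$. Applying $\exp(u) = 1 + u + u^2/2 + O(u^3)$ and collecting, the $1/a^2$ coefficient of $c_n(v,x)$ combines the direct $1/a^2$ piece of $u$ with $[x(1-v) - (1-v)^2/2]^2/2$, which yields exactly the claimed expression.

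Statements (ii) and (iii) are then immediate. For (ii) I substitute $v = 0$ and simplify $(x^2+x)/2 - (x/2 + 1/3) + 1/8 = x^2/2 - 5/24$. For (iii) I integrate (i) term by term, using $\int_0^1 (1-v)^j\,\mathrm{d}v = 1/(j+1)$; the uniformity in $v$ of the $o(1/n^2)$ remainder lets me push $o(\cdot)$ through the integral.

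For (iv)--(vii), I would combine (ii) with the elementary expansions $(n+1)/(n+x) = (a-x)/(a-1) = 1 + (1-x)/a + (1-x)/a^2 + O(1/a^3)$ and $(n+1+x)/(n+2) = a/(a+1-x) = 1 - (1-x)/a + (1-x)^2/a^2 + O(1/a^3)$, together with $r_{n+1}(x,k) = k/a$ and $r_n(x,k) = k/(a-1) = k/a + k/a^2 + O(1/a^3)$. Then (iv) and (vi) follow from multiplying and subtracting, while (v) and (vii) follow from linearly combining these ingredients with $r_n$ and $r_{n+1}$. In (v) and (vii), the $1/a$ terms cancel by design, leaving three $1/a^2$ contributions that sum to $-3k/(2a^2)$.

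The main obstacle will be the careful bookkeeping in (i): four terms of the logarithmic expansion, each multiplied by $a - x$, can contribute at order $1/a^2$, and one must not forget the quadratic correction $u^2/2$ from $\exp$. A single dropped term ruins every downstream identity. Once (i) is in place, items (ii)--(vii) are algebraically routine, with only a handful of cancellations to verify.
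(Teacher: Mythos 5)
Your plan is correct and is essentially the paper's own proof: collect the exponent of $c_n(v,x)$ to order $(n+1+x)^{-2}$ (its $1/a$ part $x(1-v)-\tfrac{(1-v)^2}{2}$ and $1/a^2$ part $\tfrac{x(1-v)^2}{2}-\tfrac{(1-v)^3}{3}$ are exactly right), exponentiate keeping the $u^2/2$ correction so that the $1/a^2$ coefficient recombines to the one displayed in (i), obtain (ii)--(iii) by substitution and termwise integration, and get (iv)--(vii) by multiplying the expansion of $c_n(0,x)$ with the geometric expansions of $(n+1)/(n+x)$, $(n+1+x)/(n+2)$, $r_n$ and $r_{n+1}$, just as the paper does. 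One caveat: if you actually carry out (vi) as described you will find $\frac{-1/2}{n+1+x}+\frac{1}{(n+1+x)^2}\bigl(\frac{x^2}{2}-2x+\frac{29}{24}\bigr)+o(n^{-2})$ rather than the right-hand side printed in the lemma; the printed (vi) is a typo (the paper's own derivation of (vi) also slips a sign in its last two lines), and it is this corrected version that yields the cancellation of the $1/(n+1+x)$ terms and the constant $-43/45$ in case (ii) of Lemma~\ref{lemma3}, so the discrepancy lies in the statement, not in your method.
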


\begin{proof}
(i) Using Taylor expansions, we have 
  \begin{align*}
  c_n(v,x)&=\left(\frac{n+x+v}{n+1+x}\right)^{n+1}\mathrm{e}^{1-v}\\
  &=\exp\left\{(n+1)\log\left(\frac{1-v}{n+1+x}\right)\right\}\exp(1-v)\\
  &=\exp\left\{(n+1)\left[-\frac{(1-v)}{n+1+x}-\frac{(1-v)^2}{2(n+1+x)^2}\right.\right.\\
  &\qquad\left.\left.-\frac{(1-v)^3}{3(n+1+x)^3}+o\left(\frac{1}{n^3}\right)\right]\right\}\exp(1-v)\\
  &=\exp\left\{(1-v)-\frac{(n+1)(1-v)}{n+1+x}\right\}\\
  &\qquad\times\exp\left\{-\frac{(1-v)^2}{2(n+1+x)^2}-\frac{(1-v)^3}{3(n+1+x)^3} \right\}\left(1+o\left(\frac{1}{n^2}\right)\right)\\
  &=\exp\left\{(1-v)\left[1-\frac{(n+1)}{n+1+x}\right]\right\}\exp\left\{-\frac{(1-v)^2}{2(n+1+x)^2} \right\}\\
  &\qquad\times\exp\left\{-\frac{(1-v)^3}{3(n+1+x)^3} \right\}\left(1+o\left(\frac{1}{n^2}\right)\right)\\
  &=\left(1+\frac{(1-v)x}{n+1+x}+\frac{(1-v)^2x^2}{2(n+1+x)^2}\right)\\
  &\qquad\times\left(1-\frac{(1-v)^2(n+1)}{2(n+1+x)^2}+\frac{(1-v)^4(n+1)^2}{8(n+1+x)^4}\right)\\
  &\qquad\times\left(1-\frac{(1-v)^3(n+1)}{3(n+1+x)^3}\right)\left(1+o\left(\frac{1}{n^2}\right)\right)\\
  &=1+\frac{1}{n+1+x}\left((1-v)x-\frac{(1-v)^2}{2}\frac{n+1}{n+1+x}\right)\\
  &\qquad+\frac{1}{(n+1+x)^2}\left[-\frac{x(1-v)^3}{2}\frac{n+1}{n+1+x}+\frac{x^2(1-v)^2}{2}\right.\\
  &\qquad\left.+\frac{(1-v)^3}{3}\frac{n+1}{n+1+x}+\frac{(1-v)^4}{8}\left(\frac{n+1}{n+1+x}\right)^2\right]+o\left(\frac{1}{n^2}\right).
  \end{align*}
  Let $\rho_n(x)=\frac{n+1}{n+1+x}=1-\frac{x}{n+1+x}$, then
  \begin{align*}
  c_n(v,x)&=1+\frac{1}{n+1+x}\left((1-v)x-\frac{(1-v)^2}{2}\rho_n(x)\right)\\
  &\qquad+\frac{1}{(n+1+x)^2}\left[-\frac{x(1-v)^3}{2}\rho_n(x)+\frac{x^2(1-v)^2}{2}+\frac{(1-v)^3}{3}\rho_n(x)\right.\\
  &\qquad\left.+\frac{(1-v)^4}{8}\rho_n(x)^2\right]+o\left(\frac{1}{n^2}\right)\\
  &=1+\frac{1}{n+1+x}\left(x(1-v)-\frac{(1-v)^2}{2}+\frac{x(1-v)^2}{2(n+1+x)}\right)\\
  &\qquad+\frac{1}{(n+1+x)^2}\left(-\frac{x(1-v)^3}{2}+\frac{x^2(1-v)^2}{2}-\frac{(1-v)^3}{3}\right.\\
  &\qquad\left.+\frac{(1-v)^4}{8}\right)+o\left(\frac{1}{n^2}\right)\\
  &=1+\frac{1}{n+1+x}\left(x(1-v)-\frac{(1-v)^2}{2}\right)\\
  &\qquad+\frac{1}{(n+1+x)^2}\left(\frac{(1-v)^2}{2}(x^2+x)-(1-v)^3\left(\frac{x}{2}+\frac{1}{3}\right)\right.\\
  &\qquad\left.+\frac{(1-v)^4}{8}\right)+o\left(\frac{1}{n^2}\right).
  \end{align*}

  (ii) It can be easily deduced from Equation \eqref{eq:cnv}. 
  
  (iii) Starting from Equation \eqref{eq:cnv}, since $\int_0^1 (1-v)^kdv =\frac{1}{k+1}$,we have
  \begin{align*}
  \int_0^1 c_n(v,x)dv&=\int_0^1 \left[1+\frac{1}{n+1+x}\left(x(1-v)-\frac{(1-v)^2}{2}\right)\right.\\
  &+\left.\frac{1}{(n+1+x)^2}\left(\frac{(1-v)^2}{2}(x^2+x)-(1-v)^3\left(\frac{x}{2}+\frac{1}{3}\right)\right.\right.\\
  &\qquad\left.\left.+\frac{(1-v)^4}{8}\right)\right]dv+o\left(\frac{1}{n^2}\right)\\
  &=1+\frac{1}{n+1+x}\left(\frac{x}{2}-\frac{1}{6}\right)\\
  &\qquad+\frac{1}{(n+1+x)^2}\left(\frac{x^2}{6}+\frac{x}{6}-\frac{x}{8}-\frac{1}{12}+\frac{1}{40}\right)+o\left(\frac{1}{n^2}\right)\\
  &=1+\frac{1}{n+1+x}\left(\frac{x}{2}-\frac{1}{6}\right))\\
  &\qquad+\frac{1}{(n+1+x)^2}\left(\frac{x^2}{6}+\frac{x}{24}-\frac{7}{120}\right)+o\left(\frac{1}{n^2}\right).
  \end{align*}

  (iv) Using Taylor expansions, we get
  \[
  \rho'_n(x)=\frac{n+1}{n+x}=1+\frac{1-x}{n+1+x}+\frac{1-x}{(n+1+x)^2}+o\left(\frac{1}{n^2}\right). 
  \]
  We expand $c_n(0,x)\rho'_n(x)$ using Equation \eqref{eq:intcnv}.
  \begin{align*}
  c_n(0,x)\rho'_n(x)&=\left(1+\left(\frac{1}{n+1+x}\right)\left(x-\frac{1}{2}\right)+\frac{1}{(n+1+x)^2}\left(\frac{x^2}{2}-\frac{5}{24}\right)\right)\\
  &\qquad\times \left(1+\frac{1-x}{n+1+x}+\frac{1-x}{(n+1+x)^2}\right)+o\left(\frac{1}{n^2}\right)\\
  &=1+\frac{1}{n+1+x}\left[x-\frac{1}{2}+1-x\right]+\frac{1}{(n+1+x)^2}\\
  &\qquad\times\left[\frac{x^2}{2}-\frac{5}{24}+(1-x)+\left(x-\frac{1}{2}\right)(1-x)\right]+o\left(\frac{1}{n^2}\right)\\
  &=1+\frac{1/2}{n+1+x}+\frac{1}{(n+1+x)^2}\left[\frac{x^2}{2}-\frac{5}{24}+1-x-x^2-\frac{1}{2}+\frac{x}{2}\right]\\
  &\qquad+o\left(\frac{1}{n^2}\right)\\
  &=1+\frac{1/2}{n+1+x}+\frac{1}{(n+1+x)^2}\left[-\frac{x^2}{2}+\frac{x}{2}+\frac{7}{24}\right]+o\left(\frac{1}{n^2}\right)
  \end{align*}
  whereby the result is deduced.
  
  (v) Let $x+r_n(x,k)\in[-1/3,2/3)$, then 
  \begin{align*}
  &r_{n+1}(x,k)-c_n(0,x)\rho'_n(x)r_n(x,k)\\
  &=r_{n+1}(x,k)-r_n(x,k)+r_n(x,k)(1-c_n(0,x)\rho'_n(x))\\
  &=\frac{k}{n+1+x}-\frac{k}{n+x}+\frac{k}{n+x}\left(\frac{-1/2}{n+1+x}\right)+o\left(\frac{1}{n^2}\right)\\
  &=\frac{k(n+x)-k(n+1+x)}{(n+x)(n+1+x)}+\frac{k}{n+x}\left(\frac{-1/2}{n+1+x}\right)+o\left(\frac{1}{n^2}\right)\\
  &=\frac{-k-k/2}{(n+x)(n+1+x)}+o\left(\frac{1}{n^2}\right)\\
  &=\frac{-3k/2}{(n+x)(n+1+x)}+o\left(\frac{1}{n^2}\right)\\
  &=\frac{-3k/2}{(n+1+x)^2}\frac{n+1+x}{n+x}+o\left(\frac{1}{n^2}\right)\\
  &=\frac{-3k}{2}\frac{1}{(n+1+x)^2}+o\left(\frac{1}{n^2}\right).
  \end{align*}
  
  (vi)  We use the Taylor expansion of $\frac{n+1+x}{n+2}$, which is $1+\frac{x-1}{n+1+x}+\frac{(x-1)^2}{(n+1+x)^2}+o\left(\frac{1}{n^2}\right)$ to deduce that
  \begin{align*}
  &\left(\frac{n+1+x}{n+2}\right)-c_n(0,x)\\
  &=1+\frac{x-1}{n+1+x}+\frac{(x-1)^2}{(n+1+x)^2}-1-\frac{x-1/2}{n+1+x}-\frac{x^2/2-5/24}{(n+1+x)^2}+o\left(\frac{1}{n^2}\right)\\
  &=\frac{-1/2}{n+1+x}-\frac{(x-1)^2-x^2/2+5/24}{(n+1+x)^2}+o\left(\frac{1}{n^2}\right)\\
  &=\frac{-1/2}{n+1+x}-\frac{1}{(n+1+x)^2}\left(x^2-2x+1-\frac{x^2}{2}+\frac{5}{24}\right)+o\left(\frac{1}{n^2}\right)\\
  &=\frac{-1/2}{n+1+x}-\frac{1}{(n+1+x)^2}\left(\frac{x^2}{2}-2x+\frac{29}{24}\right)+o\left(\frac{1}{n^2}\right).\\
  \end{align*}
  
  (vii) Now, let $x+r_n(x,k)\in[2/3,5/3)$. We use the Taylor expansion of $\frac{n+1+x}{n+2}$, which is $1+\frac{x-1}{n+1+x}+o\left(\frac{1}{n^2}\right)$ to deduce that
  \begin{align*}
  &r_{n+1}(x,k)\frac{n+1+x}{n+2}-r_n(x,k)c_n(0,x)=\frac{k}{n+1+x}\frac{n+1+x}{n+2}-\frac{k}{n+x}c_n(0,x)\\
  &=\frac{k}{n+1+x}\left(1+\frac{x-1}{n+1+x}\right)-\frac{k}{n+x}\left(1+\frac{x-1/2}{n+1+x}\right)+o\left(\frac{1}{n^2}\right)\\
  &=\frac{k}{n+1+x}+\frac{k(x-1)}{n+1+x}-\frac{k}{n+x}-\frac{k(x-1/2)}{n+x}+o\left(\frac{1}{n^2}\right)\\
  &=\frac{-k}{(n+x)(n+1+x)}+\frac{k(x-1)(n+x)-k(x-1/2)(n+1+x)}{(n+x)(n+1+x)^2)}+o\left(\frac{1}{n^2}\right)\\
  &=\frac{-k(n+1+x)+k(x-1)(n+x)-k(x-1/2)(n+1+x)}{(n+x)(n+1+x)^2}+o\left(\frac{1}{n^2}\right)\\
  &=\frac{(n+x)\left[-k\frac{n+1+x}{n+x}+k(x-1)-k(x-1/2)\frac{n+1+x}{n+x}\right]}{(n+x)(n+1+x)^2)}+o\left(\frac{1}{n^2}\right)\\
  &=\frac{-k\frac{n+1+x}{n+x}+k(x-1)-k(x-1/2)\frac{n+1+x}{n+x}}{(n+1+x)^2)}+o\left(\frac{1}{n^2}\right)\\
  &=\frac{-k+k(x-1)-k(x-1/2)}{(n+1+x)^2)}+o\left(\frac{1}{n^2}\right)\\
  &=\frac{-3k}{2}\frac{1}{(n+1+x)^2}+o\left(\frac{1}{n^2}\right).
  \end{align*}
\end{proof}
\end{document}